\documentclass[a4paper,12pt,x11names]{article}

\usepackage{csquotes}
\usepackage[english]{babel}
\usepackage{lmodern}

\usepackage{amsmath}
\usepackage{amsfonts}
\usepackage{amssymb}
\usepackage{amsthm}
\usepackage{physics} 
\usepackage{braket}
\usepackage{aligned-overset}
\usepackage{algpseudocode}
\usepackage{algorithm}

\usepackage{easyReview}

\theoremstyle{plain}
\newtheorem{theorem}{Theorem}[section]

\newtheorem{lemma}[theorem]{Lemma}
\newtheorem{proposition}[theorem]{Proposition}

\theoremstyle{definition}

\theoremstyle{remark}
\newtheorem{remark}[theorem]{Remark}

\numberwithin{equation}{section}

\usepackage{xcolor}
\usepackage{mathrsfs} 
\usepackage{enumitem}
\usepackage{boxedminipage2e}
\usepackage[font=small,labelfont=bf]{caption}
\usepackage{subcaption}

\usepackage{abbrev}
\usepackage{mathtools}
\usepackage{nicematrix}
\usepackage{xurl}
\usepackage{todonotes}

\usepackage{titlesec}
\titleformat{\section}{\normalfont\fontsize{14}{16}\bfseries}{\thesection}{1em}{}
\titleformat{\subsection}{\normalfont\fontsize{12}{14}\bfseries}{\thesubsection}{1em}{}
\titleformat{\subsubsection}{\normalfont\fontsize{11}{13}\itshape}{\thesubsubsection}{1em}{}


\usepackage[backend=bibtex, giveninits, style=alphabetic, uniquelist=false, natbib=true, doi=false, url=false, isbn = false,maxnames=8]{biblatex}
\addbibresource{lit.bib}

\definecolor{BTGreen}{RGB}{0,146,96}
\definecolor{BTDarkGray}{RGB}{72,83,90}
\definecolor{BTMediumGray}{RGB}{127,137,144}
\definecolor{BTLightGray}{RGB}{235,235,228}

\usepackage{hyperref}
\hypersetup{colorlinks=true,urlcolor=BTGreen,linkcolor=BTGreen,citecolor=BTGreen,naturalnames=true,hypertexnames=false}

\usepackage{bib}

\title{Asymptotic equivalence of non-parametric regression with spherical regressors\\and Gaussian white noise}
\author{Martin Kroll\\Universität Bayreuth}


\allowdisplaybreaks

\begin{document}
\maketitle

\begin{abstract}
We study the asymptotic behaviour of both spherical $t$-designs and random uniform designs as the set of sampling points in non-parametric regression with spherical regressors of arbitrary dimension.
We show that the corresponding regression experiments are asymptotically equivalent, in the sense of Le Cam, to the same sequence of Gaussian white noise experiments as the sample size tends to infinity.
More precisely, global asymptotic equivalence is established over spherical Sobolev balls (for both the fixed and the random uniform design case) and over spherical Besov balls (for the fixed design case).
Matching non-equivalence results demonstrate that the imposed smoothness assumptions are essentially sharp.
\end{abstract}

{\itshape Keywords}: non-parametric regression, spherical $t$-designs, random uniform design, Gaussian white noise, Le Cam distance, asymptotic equivalence of experiments

\section{Introduction}

Regression models with spherical regressors of arbitrary dimension and real-valued responses occur in a wide variety of scientific disciplines.
Instances of such models appear in the Earth sciences, where physical processes on the Earth (which is approximately a two-dimensional sphere with radius $6371$ km) 
are considered.
Specific examples are the global temperature field \cite{Oh2004} and the Earth's magnetic field \cite{Holschneider2003}.
Other applications involving functions on the two-dimensional sphere can be found in biology, for instance, for the purpose of cell-shape modeling \cite{Ruan2018}, or in texture analysis \cite{Schaeben2003}.
The special case of functions defined on the three-dimensional sphere arises in crystallography, where it can be used to describe probability distributions of crystalline orientations \cite{Mason2008}, or in medical imaging \cite{Hosseinbor2013}.
Besides, spherical harmonic expansions find various applications in quantum theory \cite{Avery1993}.

Motivated by its wide range of applications, we consider the non-parametric regression model with observations $Z_1,\ldots,Z_n$ obeying the model equations
\begin{equation}\label{eq:intro:regression}
	Z_i = f(X_i) + \sigma \epsilon_i, \qquad i = 1,\ldots,n,
\end{equation}
where $f\colon \sphere^d \to \R$ is the unknown regression function defined on the $d$-dimensional sphere $\sphere^d = \{ x \in \R^{d+1} : \lVert x \rVert = 1 \}$, $\XX = \{ X_1,\ldots,X_n\} \subset \sphere^d$ is the finite set of (deterministic or random) sampling points (following the standard convention, we will from now on denote deterministic sampling points with lowercase and random sampling points with uppercase letters), and $\epsilon_1,\ldots,\epsilon_n$ are i.i.d.\ standard Gaussian random variables independent of the design.
The noise level $\sigma > 0$ is assumed to be known.
Non-parametric estimators of the regression function $f$ in model~\eqref{eq:intro:regression} have already been studied:
\cite{Wahba1981} and \cite{Alfeld1996} consider spline interpolation and smoothing on the sphere.
Local polynomial smoothing for circular data is treated in \cite{Di_Marzio2009}.
Series estimators in terms of spherical harmonics as well as wavelet like series estimators are studied in \cite{Narcowich2006,Wiaux2008,Monnier2011}.
Let us also mention that, in the context of density estimation with spherical data, kernel methods \cite{Hall1987}, Fourier expansions \cite{Hendriks1990}, and methods based on spherical needlets \cite{Baldi2009a} have already been considered.

The theoretical analysis of the regression model \eqref{eq:intro:regression}, notwithstanding its practical relevance, is hindered by the discrete nature of the model due to the measurements taken at isolated sampling points.
Consequently, one might be tempted to replace the model \eqref{eq:intro:regression} by a continuous Gaussian white noise model,
\begin{equation}\label{eq:intro:wn}
	\dd Z(x) = f(x)\dd \mu(x)  + \sigmatilde \dd W(x),
\end{equation}
where $\sigmatilde=\sigmatilde(\sigma,n)>0$ is a suitable noise level, depending on both the noise level $\sigma$ and the sample size $n$ in the discrete model \eqref{eq:intro:regression}, $\mu$ is the normalized surface area measure on $\sphere^d$ and $\dd W$ a standard Gaussian white noise process on $\sphere^d$.
Indeed, model \eqref{eq:intro:wn} allows for a rigorous and neat mathematical analysis, which is conducted in \cite{Klemelae1999Asymptotic} where the sharp asymptotic minimax risk for different function classes and loss functions is derived.
To the best of our knowledge, no theoretical study has yet addressed the relationship between the more realistic observation model \eqref{eq:intro:regression} and the more tractable model \eqref{eq:intro:wn}. The present paper aims to close this gap.

More precisely, the main purpose of the present work is to state (essentially sharp) conditions on the set $\XX$ of sampling points and the class of admissible regression functions that allow to replace the discrete model \eqref{eq:intro:regression} with the continuous surrogate \eqref{eq:intro:wn} (or vice versa).
For this, we rely on Le Cam's theory of asymptotic equivalence of experiments.
Within this theory, the discrepancy between statistical experiments $\Eexp$ and $\Fexp$ sharing the same parameter space $\Theta$ is quantified using a pseudo-metric $\Delta$, commonly referred to as the Le Cam distance.
Two sequences $(\Eexp_n)$ and $(\Fexp_n)$ of statistical experiments having the same parameter space $\Theta$ are said to be \emph{asymptotically equivalent}, in the sense of Le Cam, if
\begin{equation*}
	\lim_{n \to \infty} \Delta(\Eexp_n,\Fexp_n) = 0.
\end{equation*}
From an inferential point of view, asymptotically equivalent experiments are equally informative in the limit.
For a more comprehensive account of asymptotic equivalence theory we refer the reader to Chapter~1 of \cite{Gine2016}, as well as the survey papers \cite{Nussbaum2004} and \cite{Mariucci2016}.

Since the seminal paper \cite{Brown1996}, asymptotic equivalence of many non-parametric experiments has been established.
The articles \cite{Brown1996,Rohde2004asymptotic,Reiss2008Asymptotic} consider fixed design regression on the unit interval with equidistant sampling points, whereas \cite{Brown2002} deals with the random design case.
\cite{Reiss2008Asymptotic} extends the results from \cite{Brown1996} and \cite{Rohde2004asymptotic} also to the multivariate and random design case.
In addition, the papers \cite{Brown1996} and \cite{Reiss2008Asymptotic} discuss minor deviations from the assumption of equidistant sampling points.
\cite{Grama2002} considers asymptotic equivalence for non-parametric regression with centered, but non-Gaussian, noise, whereas \cite{Meister2013} considers non-regular errors.
The contributions \cite{Carter2009}, \cite{SchmidtHieber2014}, and \cite{Dette2022} weaken the i.i.d.\ assumption on the noise.
The paper \cite{Cai2009} develops asymptotic equivalence theory for robust non-parametric regression.
The limits of asymptotic equivalence theory are discussed in \cite{Brown1998} and \cite{Efromovich1996}, respectively, by providing examples of asymptotically non-equivalent experiments.
However, all the papers cited so far discuss asymptotic equivalence for regression experiments and a corresponding Gaussian white noise model only when the regression domain is a subset of some Euclidean space
(admittedly, the case of \emph{periodic} regression functions on $[0,1]^d$, considered also in \cite{Reiss2008Asymptotic}, can be interpreted as a first step to a manifold setup due to the topological identification of $[0,1]^d$ and the $d$-dimensional torus $\mathbb T^d$).

Concerning the choice of design points, the asymptotic equivalence results cited above are obtained under the assumption that the sampling points are evenly spread over the whole regression domain. In the deterministic design case this is achieved by choosing sampling points forming a regular grid, in the random design case the natural approach is to sample from the uniform distribution on the regression domain.
In the following, we briefly review these two cases which will be considered in the main part of the paper.

In the one-dimensional case considered in \cite{Brown1996} and \cite{Rohde2004asymptotic}, the target parameter is a function defined on the unit interval, and the canonical deterministic design that yields asymptotic equivalence with the corresponding Gaussian white noise model is the equidistant grid with sampling points $x_i=i/n$ for $i=1,\ldots,n$.
In the multivariate setup, studied in \cite{Reiss2008Asymptotic}, the regression domain is given by the $d$-dimensional unit cube $[0,1]^d$ and a regular grid of sampling points of the form $(i_1/m,\ldots,i_d/m)$ with $m=n^{1/d}$ and $i_1,\ldots,i_d \in \{ 1,\ldots,m \}$ is assumed.
For more complicated regression domains, possibly subsets of manifolds, a comparable notion of evenly spread deterministic point sets is not evident.
For the special case of spheres, various measures to evaluate the distributional properties of finite point sets exist \cite{Brauchart2015}.
Besides its intrinsic mathematical motivation, the problem of finding evenly spread points on spheres is of fundamental relevance in fields like viral morphology, crystallography, molecular structure, and electrostatics.
We refer the reader to \cite{Saff1997Distributing} for further discussion.

In the following, we will build on the notion of spherical $t$-designs as originally introduced in \cite{Delsarte1977Spherical}.
A finite, non-empty set $\XX = \{ x_1,\ldots,x_n \} \subset \sphere^d$ is called a \emph{spherical $t$-design} if the identity
\begin{equation}\label{eq:intro:cubature}
	\frac{1}{n} \sum_{i=1}^n p(x_i) = \int_{\sphere^d} p(x) \dd \mu(x)
\end{equation}
holds for all polynomials $p$ of total degree $\leq t$ in $d+1$ variables.
Since the work of \cite{Seymour1984Averaging}, it is known that spherical $t$-designs exist for all combinations of $t$ and $d$, provided that $n$ is sufficiently large.
In \cite{Bondarenko2013Optimal} it has finally been proven that spherical $t$-designs in $\sphere^d$ exist for all $n \geq C_dt^d$ with a numerical constant $C_d > 0$ depending only on the dimension of the sphere.
This result is essentially optimal since for the minimal number $N(d,t)$ of points forming a spherical $t$-design the estimate $N(d,t) \gtrsim t^d$ has already been established in \cite{Delsarte1977Spherical}.

The notion of spherical $t$-designs, originally introduced in the field of algebraic combinatorics, has connections to various fields of mathematics and we refer to the survey article \cite{Bannai2009} for a comprehensive overview.
In the area of numerical analysis, spherical $t$-designs have already attracted some interest, for instance, as cubature points for numerical integration of functions on spheres \cite{Hesse2010}.
Spherical $t$-designs of small cardinality and low dimension are explicitly known and correspond to highly symmetrical point configurations.
For instance, spherical $t$-designs on the two-dimensional sphere are obtained as the vertex sets of the regular tetrahedron (for $t=2$), the cube and the regular octahedron (both for $t=3$), the regular dodecahedron and the regular icosahedron (both for $t=5$).
For larger values of $t$, spherical $t$-designs possessing good geometric properties are also known to exist \cite{Womersley2018}.
Figure~\ref{fig:spherical_designs} illustrates spherical $t$-designs on the two-dimensional sphere for two larger values of $t$, highlighting their excellent distributional properties.

\begin{figure}[t]
	\centering
	\begin{subfigure}[b]{0.45\textwidth}
		\centering
		\includegraphics[width=\textwidth]{./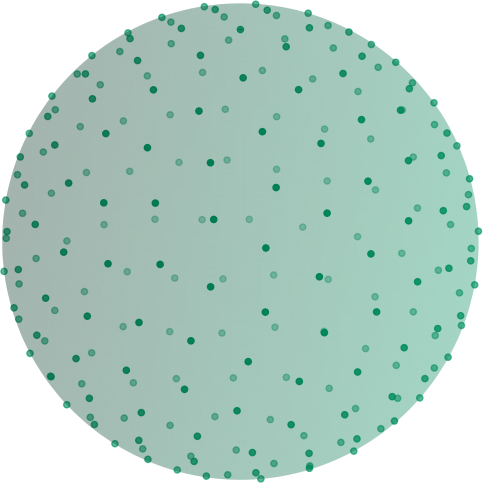}
	\end{subfigure}
	\hfill
	\begin{subfigure}[b]{0.45\textwidth}
		\centering
		\includegraphics[width=\textwidth]{./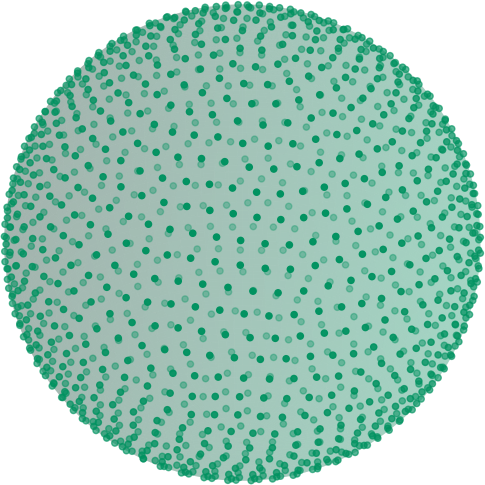}
	\end{subfigure}
	\medskip
	\caption{Examples of spherical $t$-designs for $t=20$ (left plot, $222$ points) and $t=50$ (right plot, 1302 points) on the two-dimensional sphere. The coordinates for both designs are taken from \url{https://web.maths.unsw.edu.au/~rsw/Sphere/EffSphDes/sf.html} (files \texttt{sf020.00222} (left plot) and \texttt{sf050.01302} (right plot)). Darker points are located on the near side of the sphere, lighter points on its far side.}
	\label{fig:spherical_designs}
\end{figure}

In view of these properties, spherical $t$-designs seem to be a promising choice of sampling points in (non-parametric) regression with spherical regressors of arbitrary dimension.
This heuristic will be supported in Section~\ref{s:fixed:design} by means of appropriate asymptotic equivalence results.
The first main result of this paper provides an upper bound on the Le Cam distance between the models \eqref{eq:intro:regression} and \eqref{eq:intro:wn} for a general function class $\Theta$ when the fixed sampling points form a spherical $t$-design.
In the sequel, this general result is applied to two special cases:
First, we prove global asymptotic equivalence for regression functions from spherical Sobolev balls of smoothness $s>d/2$.
Sobolev spaces on spheres can either be defined in terms of charts or via the decay of coefficients in spherical harmonic expansions.
We rely on the latter characterization which is sufficient for our purposes.
As a second application, which extends the first one, we consider spherical Besov spaces $B^s_{r,q}(\sphere^d)$ which can be defined in terms of a function's coefficients with respect to a set of spherical needlets. For this more general setup, we derive asymptotic equivalence between the models \eqref{eq:intro:regression} and \eqref{eq:intro:wn} under the assumption $s>d/r$.
This condition and the analogous condition $s>d/2$ in the Sobolev case guarantee that the considered function spaces can be continuously embedded in the Banach space $C(\sphere^d)$ of continuous functions on the sphere.

Proving global asymptotic equivalence over Sobolev and Besov balls builds on results from numerical analysis concerning the approximation by a so-called hyperinterpolation \cite{Sloan1995Polynomial} which, in statistical terminology, coincides with the least-squares estimator in certain cases.
The hyperinterpolation of the regression function is used to define an intermediate experiment between the experiments defined by \eqref{eq:intro:regression} and \eqref{eq:intro:wn}, respectively.
Since the sample size $n$ must in general be chosen strictly larger than the model dimension of the intermediate experiment, the hyperinterpolation does usually not interpolate the regression function at the design points.
In contrast, in the cited papers for the Euclidean setting, for instance \cite{Reiss2008Asymptotic}, asymptotic equivalence is proven by means of an intermediate experiment defined in terms of a suitable interpolation of the regression function.
Consequently, in the Euclidean case, the intermediate experiment and the regression experiment~\eqref{eq:intro:regression} are even non-asymptotically equivalent.
In the spherical case, equivalence in the sense of Le Cam between the intermediate experiment and the regression experiment holds only asymptotically and additional estimates are necessary.
In the general bound on the Le Cam distance between the experiments \eqref{eq:intro:regression} and \eqref{eq:intro:wn}, this leads to an extra term that is not present in the Euclidean setting considered in \cite{Reiss2008Asymptotic}.

In the second part of the paper, we will consider the regression model \eqref{eq:intro:regression} where the design points are i.i.d.\ according to the uniform distribution $\Uc(\sphere^d)$, that is, the normalized surface area measure on the sphere.
This model is more realistic in many applications where sampling points cannot be chosen by the experimenter but are themselves random.
Two instances of random uniform designs, with sample sizes equal to the ones in Figure~\ref{fig:spherical_designs}, are shown in Figure~\ref{fig:random_designs} for the case $d=2$.
\begin{figure}[t]
	\centering
	\begin{subfigure}[b]{0.45\textwidth}
		\centering
		\includegraphics[width=\textwidth]{./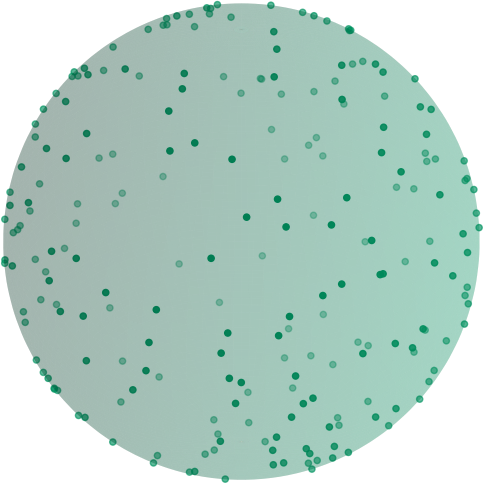}
	\end{subfigure}
	\hfill
	\begin{subfigure}[b]{0.45\textwidth}
		\centering
		\includegraphics[width=\textwidth]{./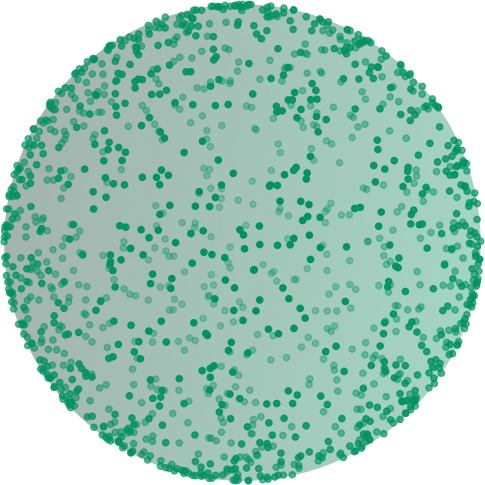}
	\end{subfigure}
	\medskip
	\caption{Examples of random uniform designs of the same size as the spherical $t$-designs in Figure~\ref{fig:spherical_designs} (left plot: 222 points, right plot: 1302 points). Darker points are located on the near side of the sphere, lighter points on its far side.}
	\label{fig:random_designs}
\end{figure}
A comparison of Figures~\ref{fig:spherical_designs} and \ref{fig:random_designs} suggests that a typical realization of a random design is much less regular than a spherical $t$-design with good geometric properties (there exist both data voids and exceptionally close sampling points within the random design).
Furthermore, the exact cubature formula \eqref{eq:intro:cubature} for spherical $t$-designs holds only in expectation for the random uniform design case which makes the analysis much more involved.
This has already been noticed in \cite{Brown2002} and \cite{Reiss2008Asymptotic} where a separate investigation of low- and high-frequency coefficients of the regression function was necessary in order to establish asymptotic equivalence with a Gaussian white noise model analogous to \eqref{eq:intro:wn}.
This separate treatment of low- and high-frequency coefficients appears also in our analysis, which in its overall structure follows the one from \cite{Reiss2008Asymptotic}. However, some special properties related to the underlying spherical geometry are of importance and additional tools from  numerical analysis and representation theory will be used.
Using these tools, asymptotic equivalence of random uniform design regression and the Gaussian white noise model is proven over Sobolev balls on the sphere.
As in the fixed design case, asymptotic equivalence holds for $s > d/2$.
The \emph{non}-equivalence of both fixed and random design regression and Gaussian white noise in the regime $s \leq d/2$ for Sobolev balls (or $s \leq d/r$ for Besov balls) is established in Section~\ref{sec:non-equivalence}.

The rest of the paper is organized as follows.
In the preliminary Section~\ref{sec:preliminaries}, we provide some background on spherical harmonic expansions, asymptotic equivalence theory, and the Gaussian white noise model.
In Section~\ref{sec:general_bound} we derive the announced general bound on the Le Cam distance between the regression experiment defined by \eqref{eq:intro:regression} on a spherical $t$-design and the Gaussian white noise model \eqref{eq:intro:wn}.
This bound is then used to establish asymptotic equivalence between these models when the target functional parameter belongs to a Sobolev (Section~\ref{sec:AE:Sobolev}) or Besov (Section~\ref{sec:AE:Besov}) ball.
Section~\ref{s:random:design} deals with asymptotic equivalence of random uniform design regression and Gaussian white noise over Sobolev balls.
In Section~\ref{sec:non-equivalence}, we prove matching non-equivalence results showing that the smoothness assumptions from Sections~\ref{s:fixed:design} and \ref{s:random:design} cannot be improved.
In Section~\ref{sec:discussion}, we conclude with a brief discussion indicating some connections to optimal design theory and open problems.
All proofs are deferred to Section~\ref{sec:proofs}.

\subsubsection*{Notation}

For sequences $(a_n)$, $(b_n)$ we write $a_n \lesssim b_n$ if there exists a universal (and irrelevant) numerical constant $C > 0$ such that $a_n \leq Cb_n$ holds. The notion $\gtrsim$ is defined analogously and $a_n \asymp b_n$ means that both $a_n \lesssim b_n$ and $b_n \lesssim a_n$ hold simultaneously.
Throughout, we denote the identity operator on a space $S$ by $\Id_S$, and the $m \times m$ identity matrix by $I_m$.
A norm $\lVert \,\boldsymbol \cdot\, \rVert$ without subindex refers to the usual Euclidean norm (where the dimension of the Euclidean space is suppressed in the notation).

\section{Preliminaries}\label{sec:preliminaries}

\subsection{Spherical harmonics}\label{subsec:spherical_harmonics}

Let $\mu$ be the normalized surface area measure on the $d$-dimensional sphere $\sphere^d = \{ x \in \R^{d+1} : \lVert x \rVert = 1 \}$ in $d+1$-dimensional ambient Euclidean space.
We denote with
\begin{equation*}
	L^2(\sphere^d) = \left\lbrace f\colon \sphere^d \to \R : \lVert f \rVert^2_{L^2(\sphere^d)} \defeq \int_{\sphere^d} f^2(x) \dd \mu(x) < \infty \right\rbrace
\end{equation*}
the Hilbert space of (equivalence classes of) square-integrable, real-valued functions on $\sphere^d$.
More generally, for any $p \in [1,\infty]$, we consider the Banach spaces
\begin{equation*}
	L^p(\sphere^d) = \left\lbrace f\colon \sphere^d \to \R : \lVert f \rVert^p_{L^p(\sphere^d)} \defeq \int_{\sphere^d} f^p(x) \dd \mu(x) < \infty \right\rbrace.
\end{equation*}
The Laplace-Beltrami operator $\Delta = \mathrm{div} \circ \nabla$ on $\sphere^d$ gives rise to the decomposition
\begin{equation*}
	L^2(\sphere^d) = \bigoplus_{\ell = 0}^\infty \mathscr H^d_{\ell},
\end{equation*}
where $\mathscr H^d_{\ell}$, $\ell \in \N_0$, denote the eigenspaces associated with the increasing sequence $(\lambda^d_{\ell})$ of non-negative eigenvalues of $-\Delta$.
It is known that $\lambda^d_{\ell} = \ell(\ell+d-1)$ and that the dimension of $\mathscr H^d_{\ell}$ equals
\begin{equation*}
	N_{\ell}^d \defeq \dim \mathscr H^d_{\ell}  = \binom{\ell + d}{d} - \binom{\ell + d -2}{d} \asymp \ell^{d-1}.
\end{equation*}
For any $\ell \in \N_0$, we choose an orthonormal basis $\{ Y_{\ell,m}, m=1,\ldots,N^d_{\ell} \}$ of $\mathscr H^d_{\ell}$ consisting of real-valued eigenfunctions $Y_{\ell,m}$ of $-\Delta$ associated with the eigenvalue $\lambda^d_{\ell}$.
Any such set of orthonormal basis functions is referred to as spherical harmonics of degree $\ell$.
Any $f \in L^2(\sphere^d)$ can be represented as an infinite series
\begin{equation}\label{eq:fourier:series:spherical:harmonics}
	f = \sum_{\ell=0}^{\infty} \sum_{m=1}^{N^d_{\ell}} \theta_{\ell,m} Y_{\ell,m},
\end{equation}
with coefficients
\begin{equation}\label{eq:def:fourier:coefficients}
	\theta_{\ell,m} = \langle f,Y_{\ell,m} \rangle_{L^2(\sphere^d)} = \int_{\sphere^d} f(x) Y_{\ell,m}(x) \dd \mu(x).
\end{equation}
For convenience, we suppress the dependence of the spherical harmonics $Y_{\ell,m}$ and the corresponding coefficients $\theta_{\ell,m}$ on $d$ in our notation.
On occasion, it will turn out convenient to switch from the double index notation to a single index. This is achieved by a one-to-one enumeration function
\begin{equation*}
	\iota \colon \{ (\ell,m) \in \N_0 \times \N : 1 \leq m \leq N^d_\ell \} \to \N, \quad (\ell,m) \mapsto \iota(\ell,m),
\end{equation*}
for which we additionally assume that
\begin{equation*}
	\iota(\ell,m) \leq \iota(\ell',m') \quad \Leftrightarrow \quad \ell < \ell' \text{ or } (\ell = \ell' \text{ and } m \leq m').
\end{equation*}
In this case, we set $Y_j = Y_{\ell,m}$ and $\theta_j = \theta_{\ell,m}$ if $j=\iota(\ell,m)$. For instance, the representation \eqref{eq:fourier:series:spherical:harmonics} can alternatively written as
\begin{equation*}
	f = \sum_{j=1}^\infty \theta_j Y_j.
\end{equation*}
We denote with
\begin{equation*}
	\mathscr P_{L}^d = \bigoplus_{\ell = 0}^L \mathscr H_{\ell}^d
\end{equation*}
the finite-dimensional space spanned by all spherical harmonics up to degree $L \in \N_0$ which has dimension
\begin{equation*}
	\dim \mathscr P_L^d = \sum_{\ell=0}^{L} N^d_\ell \asymp L^d.
\end{equation*}
We write $\Pi_V f$ for the $L^2(\sphere^d)$-orthogonal projection of $f$ onto some subspace $V$ of $L^2(\sphere^d)$.
In particular, for $f$ in \eqref{eq:fourier:series:spherical:harmonics} we have
\begin{equation*}
	 \Pi_{\Hscr^d_L} f = \sum_{m=1}^{N^d_\ell} \theta_{\ell,m} Y_{\ell,m} \quad \text{and} \quad \Pi_{\Pscr^d_L} f = \sum_{\ell=0}^L \sum_{m=1}^{N^d_\ell} \theta_{\ell,m} Y_{\ell,m}.
\end{equation*}
In the proofs of Section~\ref{sec:proofs}, we will on several occasions exploit the addition formula
\begin{equation}\label{eq:addition:formula}
	\sum_{m=1}^{N^d_\ell} Y_{\ell,m}^2(x) = N^d_\ell, \quad x \in \sphere^d,
\end{equation}
for spherical harmonics (see \cite{Dai2013}, Lemma~1.2.3 and Corollary~1.2.7).

\subsection{Asymptotic equivalence of statistical experiments}\label{sec:recap_asymptotic_equivalence}

We briefly recall the notion of Le Cam distance between statistical experiments and collect some properties that are essential for the remainder of the paper.
Let $\Eexp = (\mathsf X_\Eexp,\mathcal A_\Eexp, (\Pb_{\theta}^\Eexp)_{\theta \in \Theta})$ and $\Fexp = (\mathsf X_\Fexp,\mathcal A_\Fexp, (\Pb^\Fexp_{\theta})_{\theta \in \Theta})$ be two statistical experiments sharing the same parameter space $\Theta$.
Recall that a Markov kernel $\Kf\colon \mathsf X_\Eexp \times \mathcal A_\Fexp \to [0,1]$ induces a map that transports probability measures from $(\mathsf X_\Eexp,\mathcal A_\Eexp)$ to $(\mathsf X_\Fexp,\mathcal A_\Fexp)$.
Given a probability measure $\Pb_{\theta}^\Eexp$ on $(\mathsf X_\Eexp,\mathcal A_\Eexp)$, the probability measure $\Kf \Pb_{\theta}^\Eexp$ on $(\mathsf X_\Fexp,\mathcal A_\Fexp)$ is defined by
\begin{equation*}
	\Kf \Pb_{\theta}^\Eexp (A) = \int_{\mathsf X_\Eexp} \Kf(x,A) \dd \Pb_{\theta}^\Eexp(x), \quad A \in \mathcal A_\Fexp.
\end{equation*}
The Le Cam distance between $\Eexp$ and $\Fexp$ is defined as the symmetrized quantity
\begin{equation*}
	\Delta(\Eexp,\Fexp) = \max \{ \delta(\Eexp,\Fexp),\delta(\Fexp,\Eexp) \}
\end{equation*}
where the deficiency $\delta(\Eexp,\Fexp)$ is defined by
\begin{equation*}
	\delta(\Eexp,\Fexp) = \inf_{\Kf} \sup_{\theta \in \Theta} V(\Kf\Pb^\Eexp_{\theta}, \Pb^\Fexp_{\theta}).
\end{equation*}
The infimum in this definition is taken over all Markov kernels $\Kf\colon (\mathsf X_\Eexp,\mathcal A_\Eexp) \to (\mathfrak X_\Fexp,\mathcal A_\Fexp)$ and $V$ denotes the total variation distance.
If $\Delta(\Eexp,\Fexp) = 0$, the experiments $\Eexp$ and $\Fexp$ are said to be (exactly) equivalent in the sense of Le Cam.
More generally, two sequences $(\Eexp_n)$ and $(\Fexp_n)$ of statistical experiments having the same parameter space $\Theta$ are said to be asymptotically equivalent if
\begin{equation*}
	\lim_{n \to \infty} \Delta(\Eexp_n,\Fexp_n) = 0.
\end{equation*}

\subsection{Gaussian white noise}
Consider the Gaussian white noise model with continuous observation
\begin{equation}\label{eq:app:gaussian:white:noise}
	\dd Z(x) = f(x) \, \dd \mu(x) + \sigmatilde  \dd W(x),
\end{equation}
where $f \in L^2(\sphere^d)$, $\sigmatilde >0$, and $\dd W$ is standard Gaussian white noise on the $d$-dimensional sphere.
The stochastic differential equation \eqref{eq:app:gaussian:white:noise} can be interpreted in a distributional sense as follows:
\eqref{eq:app:gaussian:white:noise} is equivalent to observing a Gaussian process $G$, indexed by the set $L^2(\sphere^d)$ of test functions, which is defined by
\begin{align*}
	G_g &= \int_{\sphere^d} g(x) \dd Z(x)\\
	&= \int_{\sphere^d} f(x)g(x)  \dd\mu(x) + \widetilde \sigma \int_{\sphere^d} g(x)  \dd W(x)
\end{align*}
for $g \in L^2(\sphere^d)$.
Here, the white noise part
\begin{equation*}
	g \mapsto \int_{\sphere^d} g(x) \dd W(x)
\end{equation*}
is a centered Gaussian process with covariance structure
\begin{equation*}
	\Cov \left( \int g(x) \dd W(x), \int h(x) \dd W(x) \right) = \langle g,h \rangle_{L^2(\sphere^d)}.
\end{equation*}
Evaluating the process $G$ at an orthonormal basis $\{ \varphi_j \}_{j \in \N}$ of $L^2(\sphere^d)$ shows that observing \eqref{eq:app:gaussian:white:noise} is equivalent to the infinite-dimensional Gaussian sequence model
\begin{equation*}
	y_j = G_{\varphi_j} = \theta_j + \widetilde \sigma \eta_j, \quad j\in \N,
\end{equation*}
where $\theta_j = \langle f,\varphi_j \rangle_{L^2(\sphere^d)}$ is the sequence of Fourier coefficients (with respect to the basis $\{ \varphi_j \}_{j \in \N}$) and $(\eta_j)_{j \in \N}$ is a sequence of independent standard Gaussian random variables.
Let $\Pb_{f}$ denote the distribution of the process $Z$ in \eqref{eq:app:gaussian:white:noise}.
The following expression for the total variation distance between $\Pb_f$ and $\Pb_{g}$ is derived in \cite{Carter2006continuous}, Section~3.2, and will be used frequently in the proofs of Section~\ref{sec:proofs}:
\begin{equation}\label{eq:app:bound:tv:gwnm}
	V(\Pb_f,\Pb_{g}) = 1 - 2\Phi \left( - \frac{\lVert f-g \rVert_{L^2(\sphere^d)}}{2\sigmatilde} \right) \lesssim \sigmatilde^{-1} \, \lVert f-g \rVert_{L^2(\sphere^d)},
\end{equation}
where $\Phi$ denotes the distribution function of a standard Gaussian random variable.

\section{Regression on spherical $t$-designs}\label{s:fixed:design}

For a class $\Theta$ of functions $f\colon \sphere^d \to \R$ and a finite, non-empty set $\XX = \{ x_1,\ldots,x_n \} \subset \sphere^d$ of fixed sampling points,
we denote by $\Fexp^d_{n}=\Fexp^d_{n}(\Theta,\XX)$ the regression experiment with observations
\begin{equation}\label{eq:general:regression}
	Z_i = f(x_i) + \sigma \epsilon_i, \qquad i=1,\ldots,n,
\end{equation}
where $f\in \Theta$ is the unknown regression function, $\epsilon_1,\ldots,\epsilon_n$ are i.i.d.\ standard Gaussian, and $\sigma > 0$.
We assume that the standard deviation $\sigma$ of the additive noise is known and suppress the dependence of the experiment $\Fexp^d_{n}$ on $\sigma$ in the notation.
Similarly, we denote by $\Gexp^d_{n}=\Gexp^d_{n}(\Theta)$ the Gaussian white noise experiment defined via Equation~\eqref{eq:intro:wn} in the introduction with $\widetilde \sigma = \sigma/\sqrt n$, that is, given by
\begin{equation}\label{eq:general:wn}
	\dd Z(x) = f(x)\dd \mu(x)  + \frac{\sigma}{\sqrt n} \dd W(x).
\end{equation}

\subsection{A general bound for the Le Cam distance}\label{sec:general_bound}

In order to state a general bound on the Le Cam distance $\Delta(\Fexp^d_{n},\Gexp^d_{n})$, we introduce some further notation.
Given a finite set $\{ \psi_1,\ldots,\psi_m \}$ of approximating functions in $L^2(\sphere^d)$, not necessarily forming an orthonormal basis of their span $S = \linspan(\{ \psi_1,\ldots,\psi_m \})$, we consider the finite-dimensional approximation
\begin{equation}\label{eq:truncated:approximation}
	f_m = \sum_{j=1}^{m} \beta_j \psi_j
\end{equation}
of a function $f\in L^2(\sphere^d)$ where
\begin{equation*}
	\beta_j = \langle f,\psi_j \rangle_{L^2(\sphere^d)} = \int_{\sphere^d} f(x)\psi_j(x) \dd \mu(x).
\end{equation*}
The empirical counterparts $\betatilde_j$ of the coefficients $\beta_j$ are obtained by the corresponding equal-weight cubature rules at the sampling points $\XX=\{ x_1,\ldots,x_n \}$,
\begin{equation}\label{eq:def:betatilde:cubature}
	\betatilde_j = \langle f,\psi_j \rangle_n = \frac{1}{n} \sum_{i=1}^n f(x_i)\psi_j(x_i).
\end{equation}
Since spherical harmonics are restrictions of polynomials in $d+1$ variables to the unit sphere with the index $\ell$ indicating the degree of the polynomial associated with $Y_{\ell,m}$ (see \cite{Dai2013}, Chapter~1, for details), a spherical $t$-design $\XX=\{ x_1,\ldots,x_n \}$ with $t=L$ yields an equal-weight cubature rule that is exact for functions in $\mathscr P_{L}^d$, that is,
	\begin{equation*}
		\frac{1}{n} \sum_{i=1}^{n} f(x_i) = \int_{\sphere^d} f(x) \dd \mu(x), \quad f \in \mathscr P^d_{L}.
\end{equation*}

The semi-norm associated with the inner product $\langle \boldsymbol \cdot,\boldsymbol{\cdot} \rangle_n$ is denoted by $\lVert \,\boldsymbol{\cdot}\, \rVert_n$.
Replacing $\beta_j$ in \eqref{eq:truncated:approximation} with $\betatilde_j$ leads to the empirical approximation
\begin{equation}\label{eq:def:ftilde_m}
	\ftilde_{m} = \sum_{j=1}^m \betatilde_j \psi_j.
\end{equation}
Note that while $f_m$ depends on $f$ via exact $L^2(\sphere^d)$-inner products, the approximation $\ftilde_m$ is fully data-driven by means of the cubature formula \eqref{eq:def:betatilde:cubature}.
Associated with $\ftilde_m$ we introduce the intermediate experiment $ \Gtildeexp^d_{n}= \Gtildeexp^d_{n}(\Theta,\XX)$ with continuous observation
\begin{equation}\label{eq:intermediate}
	\dd \widetilde Z(x) = \ftilde_m(x) \dd \mu(x) + \frac{\sigma}{\sqrt n} \dd W(x).
\end{equation}
Note that the intermediate experiment $\Gtildeexp^d_{n}$, contrary to the Gaussian white noise experiment $\Gexp_{n}^d$, depends on the sampling points $\XX$ via the empirical coefficients $\betatilde_j$.
The following theorem states the announced upper bound on the Le Cam distance $\Delta(\Fexp_{n}^d,\Gexp_{n}^d)$.

\begin{theorem}\label{thm:fixed:general}
	Consider the experiments $\Fexp_{n}^d$, defined by \eqref{eq:general:regression}, and $\Gexp_{n}^d$, defined by \eqref{eq:general:wn}, where the unknown parameter $f$ belongs to some class $\Theta$ of functions defined on the $d$-dimensional sphere~$\sphere^d$. 
	Assume that $S = \linspan(\{ \psi_1,\ldots,\psi_m \}) \subseteq \mathscr P^d_{L}$ for some $L \in \N_0$ and that $\XX = \{ x_1,\ldots,x_n \} \subset \sphere^d$ is a spherical $t$-design for $t \geq 2L$.
	Then, the Le Cam distance between the two experiments is bounded by
	\begin{align*}
		\Delta(\Fexp^d_{n},\Gexp^d_{n}) \leq \Delta_1 + \Delta_2,
	\end{align*}
	where
	\begin{align*}
		\Delta_1 = \Delta(\Fexp^d_{n},\widetilde \Gexp^d_{n})&\leq  1 - 2\Phi \left( -\frac{\sqrt n}{2\sigma} \sup_{f \in \Theta} \, \lVert f- \ftilde_m \rVert_n \right)\\
		&\lesssim \sigma^{-1}n^{1/2} \sup_{f \in \Theta} \, \lVert f- \ftilde_m \rVert_n
	\end{align*}
	and
	\begin{align*}
		\Delta_2 = \Delta(\widetilde \Gexp^d_{n},\Gexp^d_{n}) &\leq 1 - 2\Phi  \left( - \frac{\sqrt n}{2\sigma}\, \sup_{f \in \Theta} \, \lVert f - \ftilde_m \rVert_{L^2(\sphere^d)} \right)\\
		&\lesssim \sigma^{-1} n^{1/2} \sup_{f \in \Theta}\, \lVert f - \ftilde_m \rVert_{L^2(\sphere^d)}.
	\end{align*}
\end{theorem}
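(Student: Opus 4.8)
The plan is to interpose the intermediate experiment and apply the triangle inequality for the Le Cam distance, giving $\Delta(\Fexp^d_n,\Gexp^d_n)\le\Delta(\Fexp^d_n,\widetilde\Gexp^d_n)+\Delta(\widetilde\Gexp^d_n,\Gexp^d_n)=\Delta_1+\Delta_2$, and then to treat the two summands separately. The term $\Delta_2$ is the benign one: both $\widetilde\Gexp^d_n$ and $\Gexp^d_n$ are Gaussian white noise experiments on $\sphere^d$ with the \emph{same} noise level $\sigma/\sqrt n$, differing only through their drifts $\ftilde_m$ and $f$. I would therefore use the identity Markov kernel together with the exact total-variation formula \eqref{eq:app:bound:tv:gwnm}, which for fixed $f$ reads $V(\Pb_{\ftilde_m},\Pb_f)=1-2\Phi(-\sqrt n\,\lVert f-\ftilde_m\rVert_{L^2(\sphere^d)}/(2\sigma))$. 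As $x\mapsto 1-2\Phi(-x)$ is increasing on $[0,\infty)$, taking the supremum over $f\in\Theta$ moves it inside the argument of $\Phi$ and produces exactly the stated expression; since the same kernel serves both directions and the white-noise total-variation formula is an identity, $\Delta_2$ equals (not merely is bounded by) this quantity, and $1-2\Phi(-x)\lesssim x$ near the origin gives the asymptotic form.

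The heart of the theorem is the bound on $\Delta_1=\Delta(\Fexp^d_n,\widetilde\Gexp^d_n)$. The structural key is the $t$-design hypothesis with $t\ge 2L$: since $S\subseteq\mathscr P^d_L$, every product $\psi_j\psi_k$ has degree at most $2L\le t$ and is integrated exactly by the equal-weight cubature rule, so that $\tfrac1n\sum_{i=1}^n\psi_j(x_i)\psi_k(x_i)=\langle\psi_j,\psi_k\rangle_{L^2(\sphere^d)}$; that is, the empirical inner product $\langle\cdot,\cdot\rangle_n$ and the $L^2$ inner product coincide on all of $S$. Using this, I would reduce \emph{both} experiments to Gaussian experiments on $\R^n$. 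The regression experiment already is one, $\Fexp^d_n\cong N\bigl((f(x_i))_i,\sigma^2 I_n\bigr)$. For the intermediate experiment, since $\ftilde_m\in S$, sufficiency first reduces \eqref{eq:intermediate} to observing the coefficients of $\ftilde_m$ against an $L^2$-orthonormal basis $\{e_1,\dots,e_s\}$ of $S$ with independent $N(0,\sigma^2/n)$ errors (the orthogonal-complement part of $\dd W$ being ancillary and freely reproducible); evaluating this observation at the design points, and invoking the reproducing kernel of $S$ together with the design identity, exhibits $\widetilde\Gexp^d_n$ as equivalent to $N\bigl((\ftilde_m(x_i))_i,\sigma^2 P\bigr)$, where $P=\tfrac1n\bigl(\sum_{k=1}^s e_k(x_i)e_k(x_{i'})\bigr)_{i,i'}$ is the orthogonal projection onto the $s$-dimensional evaluation span of $S$ — that $\tfrac1n E^\top E=I_s$, whence $P^2=P$, is once more the design identity.

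It then remains to compare the two $\R^n$-Gaussian experiments $N((f(x_i))_i,\sigma^2 I_n)$ and $N((\ftilde_m(x_i))_i,\sigma^2 P)$ by Markov kernels in both directions. From the intermediate to the regression experiment — the substantial direction — I would take the observation, whose covariance $\sigma^2 P$ lives on $\operatorname{range}(P)$, and adjoin an independent fresh Gaussian vector with covariance $\sigma^2(I_n-P)$; this restores the full covariance $\sigma^2 I_n$ and yields $N((\ftilde_m(x_i))_i,\sigma^2 I_n)$, which differs from the regression law only in its mean. The exact Gaussian total-variation formula in $\R^n$ then bounds this deficiency by $1-2\Phi(-\lVert(f(x_i))_i-(\ftilde_m(x_i))_i\rVert_2/(2\sigma))$, and $\lVert(f(x_i))_i-(\ftilde_m(x_i))_i\rVert_2^2=\sum_i(\ftilde_m(x_i)-f(x_i))^2$ gives the stated form after the supremum over $\Theta$. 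In the opposite direction, applying $P$ to the regression observation already reproduces the covariance $\sigma^2 P$, while the design identity — together with the fact that for the orthonormal spherical harmonics and the tight needlet frame of the later sections $\ftilde_m$ is the orthogonal projection of $f$ onto $S$, so that $P(f(x_i))_i=(\ftilde_m(x_i))_i$ — supplies the matching mean; this deficiency therefore vanishes and $\Delta_1$ is governed by the first direction alone.

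The main obstacle is precisely this substantial direction and the residual it leaves behind. In the Euclidean interpolation construction of \cite{Reiss2008Asymptotic} the surrogate interpolates $f$ at the sampling points, so the regression experiment is recovered without loss; here, by contrast, $n$ exceeds the model dimension $s=\dim S$, the surrogate $\ftilde_m$ does \emph{not} interpolate $f$, and $P$ discards the high-frequency part of the evaluation vector. What makes the argument close is exactly the design identity: it forces the reconstructed vector to have covariance precisely $\sigma^2 P$, so that the complementary noise $\sigma^2(I_n-P)$ fits without remainder and the \emph{only} surviving discrepancy between the two $\R^n$-Gaussians is the mean gap $\sum_i(\ftilde_m(x_i)-f(x_i))^2$. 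Showing that this gap, and the $L^2$-gap $\lVert f-\ftilde_m\rVert_{L^2(\sphere^d)}$ controlling $\Delta_2$, are asymptotically negligible is then the task handed off to the Sobolev and Besov settings of the subsequent sections.
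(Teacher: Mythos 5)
Your proposal is correct and follows essentially the same route as the paper: triangle inequality through the intermediate experiment, the white-noise total-variation formula for $\Delta_2$, and for $\Delta_1$ the exact (deficiency-zero) reduction of the regression data onto $S$ via the design identity in one direction, and in the other direction evaluation at the design points followed by adjoining independent noise with covariance $\sigma^2(I_n - n^{-1}XX^\top)$ and a Gaussian mean-shift total-variation bound. Your reformulation of both experiments as $\R^n$-Gaussians with the explicit projection $P=n^{-1}XX^\top$ is just a coordinate version of the paper's process-level construction (and, like the paper's claim that $\zeta_m$ is standard white noise on $S$, it implicitly uses that $\{\psi_j\}$ is a Parseval frame for $S$, which holds in all applications considered).
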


\begin{remark}
	Let us briefly comment on the two terms appearing in the bound on $\Delta(\Fexp_n^d,\Gexp_n^d)$ derived in the theorem.
	The second term
	\begin{equation*}
		\Delta_2 \lesssim \sigma^{-1}n^{1/2} \sup_{f \in \Theta}\, \lVert f - \ftilde_m \rVert_{L^2(\sphere^d)}
	\end{equation*}
	is not surprising as it already appears in known results for the multivariate Euclidean case studied before (see, for instance, the bound stated in Theorem~2.4 of \cite{Reiss2008Asymptotic}).
	In contrast, the first term
	\begin{equation*}
		\Delta_1 \lesssim \sigma^{-1}n^{1/2} \sup_{f \in \Theta} \, \lVert f- \ftilde_m \rVert_n
	\end{equation*}
	is new.
	In the setting of Theorem~\ref{thm:fixed:general}, the approximant $\ftilde_m$ can in general not be chosen to interpolate $f$ at the sampling points $x_1,\ldots,x_n$.
	This is due to the non-existence, except for few special cases, of so-called \emph{tight} spherical $t$-designs.
	A more detailed discussion of this issue is given in Remark~\ref{rem:tight_t-designs}.
\end{remark}

\begin{remark}
	The proof of Theorem~\ref{thm:fixed:general} is constructive in the sense that it provides explicit Markov kernels for transferring observations between the two considered experiments.
	These kernels involve randomizations, which, however, require knowledge of the noise level $\sigma$.
	This assumption is common in the literature on asymptotic equivalence between non-parametric regression and Gaussian white noise experiments (see, for instance, \cite{Brown2002,Rohde2004asymptotic,Reiss2008Asymptotic}).
	Only few works address the more realistic case with an unknown noise variance, \cite{Carter2007} being a notable contribution in this direction.
\end{remark}

\begin{remark}
	From Theorem~\ref{thm:fixed:general} it is easily derivable that global asymptotic equivalence follows if the class $\Theta$ consists of functions $f$ that satisfy the smoothness condition
	\begin{equation}\label{eq:cond:themistoclakis}
		\lim_{n \to \infty} n^{1/2} \sup_{f \in \Theta} \inf_{p \in \mathscr P^d_L} \, \lVert f - p \rVert_{L^\infty(\sphere^d)} = 0
	\end{equation}
	with $L=L(n) \to \infty$ as $n \to \infty$.
	This smoothness assumption appears in the numerical analysis literature since it guarantees uniform convergence of the hyperinterpolation $\ftilde_m$ to $f$ \cite{Themistoclakis2019}.
	For instance, for band-limited functions, \eqref{eq:cond:themistoclakis} is trivially fulfilled.
	More precisely, for the function class
	\begin{equation*}
		\Theta_{L^\ast} = \left\lbrace  f\colon \sphere^d \to \R : f = \sum_{\ell=0}^{L^\ast} \sum_{m=1}^{N^d_\ell} \theta_{\ell,m} Y_{\ell,m} \right\rbrace
	\end{equation*}
	for some \emph{fixed} $L^\ast \in \N_0$, the considered regression model $\Fexp_n^d$ and the Gaussian white noise model $\Gexp_n^d$ are exactly equivalent, for $n$ resp.\ $L$ sufficiently large, to observing a multivariate Gaussian of dimension $\dim \mathscr P^d_{L^\ast} =\sum_{\ell=0}^{L^\ast} N^d_\ell$ with unknown mean in $\R^{\dim \mathscr P^d_{L^\ast}}$ and covariance matrix $\sigma^2 n^{-1} I_{\dim \mathscr P^d_{L^\ast}}$.
\end{remark}

\subsection{Results for specific function classes}

In the following two sections, we apply the general Theorem~\ref{thm:fixed:general} to two specific function classes: Sobolev balls (Section ~\ref{sec:AE:Sobolev}) and Besov balls (Section~\ref{sec:AE:Besov}).

\subsubsection{Asymptotic equivalence over Sobolev balls}\label{sec:AE:Sobolev}

For a smoothness parameter $s>0$ (not necessarily being an integer), we define the (fractional) Sobolev norm $\lVert \,\cdot\, \rVert_{H_2^s(\sphere^d)}$ by
\begin{equation*}
	\lVert f \rVert_{H_2^s(\sphere^d)}^2 = \sum_{\ell=0}^{\infty} (1+\ell(\ell+d-1))^s \sum_{m=1}^{N^d_{\ell}} \theta_{\ell,m}^2, \quad f \in \mathbb L^2(\sphere^d),
\end{equation*}
where $(\theta_{\ell,m})$ is the sequence of Fourier coefficients of the function $f$ defined via \eqref{eq:def:fourier:coefficients}.
The Sobolev space $H_2^s(\sphere^d)$ is defined as the set of functions $f \in \mathbb L^2(\sphere^d)$ such that $\lVert f \rVert_{H_2^s(\sphere^d)} < \infty$.
It is easy to verify that
\begin{equation*}
	\lVert f \rVert_{H_2^s(\sphere^d)} \asymp \lVert f \rVert_{L^2(\sphere^d)} + \lVert (-\Delta)^{s/2} f \rVert_{L^2(\sphere^d)} \asymp \lVert (\Id_{L^2(\sphere^d)} - \Delta)^{s/2} f \rVert_{L^2(\sphere^d)}
\end{equation*}
(replacing the $L^2(\sphere^d)$-norm by another $L^p(\sphere^d)$-norm can be used to define fractional Sobolev spaces $H_p^s(\sphere^d)$ for general $p \in [1,\infty]$).
The Fourier multiplier approach presented here to define fractional Sobolev spaces is sufficient for the purposes of this paper.
This approach is equivalent to the one that defines Sobolev spaces in terms of charts \cite{Triebel1986}.
The first specific function class for which asymptotic equivalence results will be derived in the sequel are the Sobolev balls
\begin{equation*}
	H^s_2(\sphere^d,R) = \{ f \in H^s_2(\sphere^d) : \lVert f \rVert_{H_2^s(\sphere^d)} \leq R \},
\end{equation*}
consisting of functions in $H^s_2(\sphere^d)$ with Sobolev norm bounded by $R>0$.

In order to derive from Theorem~\ref{thm:fixed:general} asymptotic equivalence of $\Fexp_n^d$ and $\Gexp_n^d$ over Sobolev balls, we take $\{ \psi_1,\ldots,\psi_m \}$ as the set of all spherical harmonics up to a certain resolution level $L$, that is, we set
$\psi_j = Y_{\ell,m}$ if $j=\iota(\ell,m)$ for the enumeration function $\iota$ introduced in Section~\ref{subsec:spherical_harmonics}.
Consequently, $m = \dim \mathscr P^d_L \asymp L^d$.
In this specific setup, assuming that the set $\XX$ of sampling points is a spherical $t$-design with $t \geq 2L$ ensures that $\ftilde_m$ defined in \eqref{eq:def:ftilde_m} is the least-squares approximation of $f$ in $\mathscr P_L^d$ based on the data $(x_1,f(x_1)),\ldots,(x_n,f(x_n))$, that is,
\begin{equation}\label{eq:def:ftilde_m:Sobolev}
	\ftilde_m = \arg \min_{p \in \mathscr P_L^d} ~ \sum_{i=1}^{n} (p(x_i) - f(x_i))^2.
\end{equation}
In fact, this follows from the inclusion $\mathscr P^d_L \cdot \mathscr P^d_L \subseteq \mathscr P^d_{2L}$ which implies that for the design matrix $X=(\psi_j(x_i)) \in \R^{n \times m}$ we have $X^\transpose X = n I_{m}$.
Hence, the least-squares approximation is given by $\sum_{j=1}^{m} \betatilde_{j} \psi_j$ where $\betatilde = (\betatilde_1,\ldots,\betatilde_m)^\transpose$ satisfies
\begin{align*}
	\betatilde &= (X^\transpose X)^{-1} X^\transpose (f(x_1),\ldots,f(x_n))^\transpose=n^{-1} X^\transpose (f(x_1),\ldots,f(x_n))^\transpose.
\end{align*}
From this, we obtain $\betatilde_j = n^{-1} \sum_{i=1}^{n} f(x_i) \psi_j(x_i) = \langle f,\psi_j\rangle_n$, showing that the least-squares approximation coincides with $\ftilde_m$ in this case.
Before we devote ourselves to the application of Theorem~\ref{thm:fixed:general} to Sobolev balls, we make two remarks.

\begin{remark}
	For the case of Sobolev balls, condition \eqref{eq:cond:themistoclakis} leads to a stronger assumption on the interplay of the sample size $n$, the resolution level $L$, and the smoothness parameter $s$ than necessary.
	More precisely, we will consider below spherical $t$-designs of cardinality $n \asymp L^d$ which is the minimal order achievable~\cite{Delsarte1977Spherical}.
	Then, global asymptotic equivalence can be achieved under the assumption $s > d/2$.
	From \eqref{eq:cond:themistoclakis}, only the more restrictive condition $s>d$ can be obtained.
	In fact, this is a direct consequence of Equation~(26) in \cite{Kushpel2000} which implies that
	\begin{equation*}
		\sup_{f \in H^s_2(\sphere^d,R)} \inf_{p \in \mathscr P^d_L} \, \lVert f - p \rVert_{L^\infty(\sphere^d)} \gtrsim L^{-s+d/2}.
	\end{equation*}
	This lower bound implies that the condition $L^{-s+d} \to 0$ as $L = L(n) \to \infty$, which is equivalent to $s>d$, must be satisfied in order to guarantee that the term $\Delta_1$ converges to zero as desired.
	The less restrictive condition $s>d/2$, established below by finding an appropriate upper bound for the quantity $\lVert f-\Pi_{\Pscr^d_L} f \rVert_n$, is also the one expected from the corresponding results in the Euclidean case \cite{Reiss2008Asymptotic}.
\end{remark}

\begin{remark}\label{rem:tight_t-designs}
	Contrary to the approach in the Euclidean case considered in \cite{Reiss2008Asymptotic}, the least-squares approximation $\ftilde_m$ can in general not be chosen as an interpolation through the points $(x_1,f(x_1)),\ldots,(x_n,f(x_n))$.
	Lemma~3 in \cite{Sloan1995Polynomial} states that the classical interpolation formula
	\begin{equation*}
		\ftilde_m(x_i) = f(x_i), \qquad i=1,\ldots,n,
	\end{equation*}
	holds for all continuous functions on the sphere if and only if the equal-weight cubature rule associated with the design $\XX$, which is {exact} for $f \in \mathscr P^d_{2L}$, is also \emph{minimal} in the sense that $n$ equals the dimension of $\mathscr P^d_L$ (see \cite{Sloan1995Polynomial}, p.~242, for this definition of minimality).
	Now, also from \cite{Sloan1995Polynomial}, Section~4.1, we report that spherical $t$-designs with this property (such spherical $t$-designs are also referred to as \emph{tight}) only exist in few special cases.
	More precisely, in \cite{Bannai1979} it is shown that tight spherical designs do not exist for $d \geq 2$ and $L \geq 3$.
	Therefore, the sample size $n$ is strictly larger than $\dim \mathscr P^d_L$ in general and $\ftilde_m\lvert_{\XX} ~ \neq f\lvert_{\XX}$.
	Consequently, the term $\Delta_1$ in Theorem~\ref{thm:fixed:general} does not vanish.
	In the case $d=1$ considered in \cite{Reiss2008Asymptotic}, on the contrary, the term $\Delta_1$ is not present.
	In that case, a set of $2L+1$ equidistant design points (these points form a regular $2L+1$-gon on the unit circle $\sphere^1$; see also Example~2.6 in \cite{Bannai2009}) defines a spherical $t$-design for $t=2L$ and the least-squares approximation interpolates $f$ on the spherical $t$-design.
	In the multivariate case with regression domain equal to $[0,1]^d$, which is also treated in \cite{Reiss2008Asymptotic}, a product design approach can be chosen which inherits the interpolation property from the case $d=1$.
	In this sense, the term $\Delta_1$ is a new ingredient appearing in the bound on the Le Cam distance due to the considered spherical framework of dimension $d \geq 2$.
	The term $\Delta_2$, as mentioned above, is standard. In contrast to the work \cite{Reiss2008Asymptotic}, where fine properties of the Fourier basis are used in order to bound this term, we will exploit recent results from \cite{Lu2023} on approximation by (weighted) least squares polynomials on the sphere to bound $\Delta_2$ (and also $\Delta_1$).
\end{remark}

Applying the bound on the Le Cam distance derived in Theorem~\ref{thm:fixed:general} establishes asymptotic equivalence of the experiments $\Fexp_n^d$ and $\Gexp_n^d$ over Sobolev balls.
\begin{theorem}\label{thm:fixed:Sobolev}
	Assume that $\XX = \{ x_1,\ldots,x_n \}$ is a spherical $t$-design with $t \geq 2L$.
	Then, for $\Theta=H_2^s(\sphere^d,R)$ with $s>d/2$,
	\begin{equation*}
		\Delta(\Fexp^d_{n},\Gexp^d_{n}) \lesssim \sigma^{-1} n^{1/2}L^{-s}R. 
	\end{equation*}
	In particular, choosing $\XX$ as a spherical $t$-design with $t=2L$ and cardinality $n$ of the minimal possible order $n \asymp L^d$, yields
	\begin{equation*}
		\Delta(\Fexp^d_{n},\Gexp^d_{n}) \lesssim \sigma^{-1}L^{-s+d/2}R \asymp \sigma^{-1}n^{-s/d+1/2}R,
	\end{equation*}
	and the experiments $\Fexp^d_{n}$ and $\Gexp^d_{n}$ are asymptotically equivalent as $n \to \infty$.
\end{theorem}

\subsubsection{Asymptotic equivalence over Besov balls}\label{sec:AE:Besov}

The representation of a regression function, for instance in model \eqref{eq:intro:regression}, as a spherical harmonic expansion as in \eqref{eq:fourier:series:spherical:harmonics} suffers from the drawback that spherical harmonics are spread all over the sphere.
This leads to poor local performance of regression estimates relying on truncated spherical harmonic expansions.
In order to address this issue, the article \cite{Narcowich2006} introduced a class of localized tight frames on spheres of arbitrary dimension, which are referred to as \emph{needlets} due to their excellent localization properties.
In Figure~\ref{fig:sh_vs_needlet}, this essential difference between spherical harmonics and needlets is illustrated by opposing typical heatmaps of a spherical harmonic and a spherical needlet, respectively.
\begin{figure}[h]
	\centering
	\begin{subfigure}[b]{0.495\textwidth}
		\centering
		\includegraphics[width=\textwidth]{./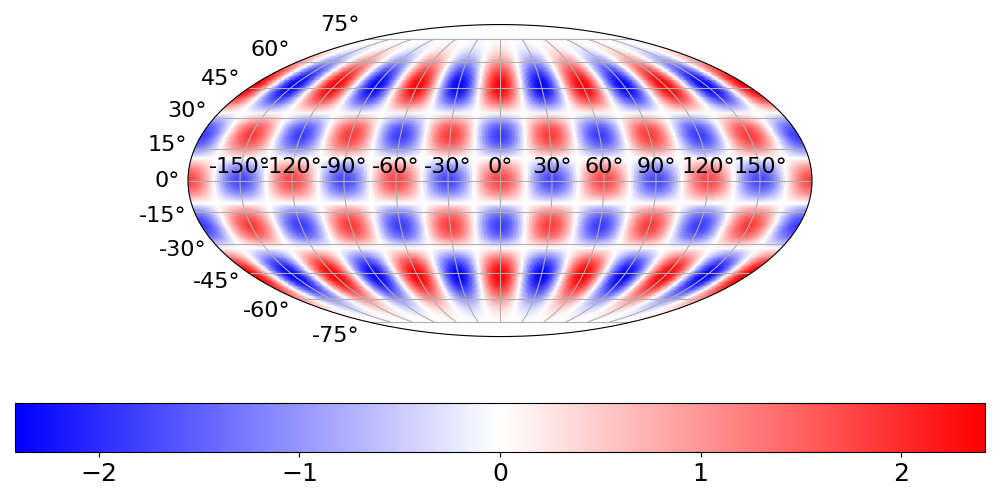}
	\end{subfigure}
	\hfill
	\begin{subfigure}[b]{0.495\textwidth}
		\centering
		\includegraphics[width=\textwidth]{./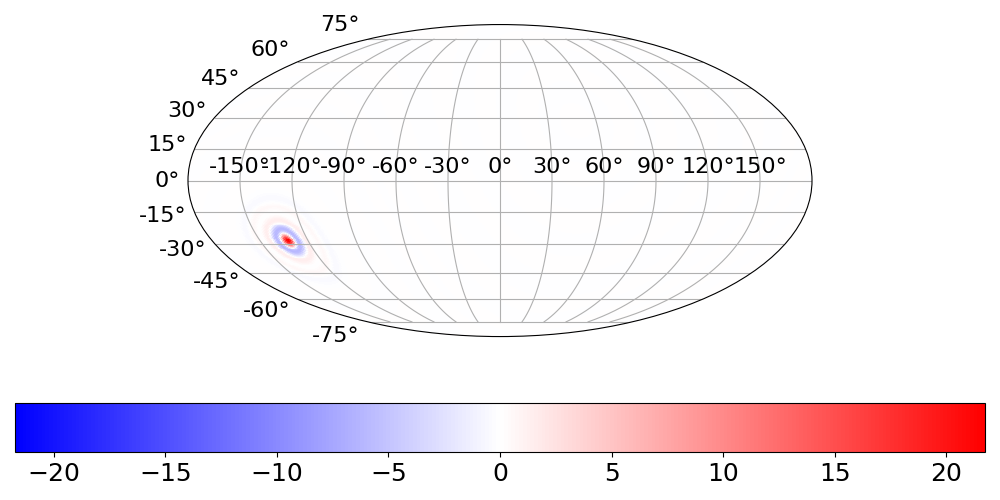}
	\end{subfigure}
	\medskip
	\caption{Heatmaps of a spherical harmonic from the eigenspace $\mathscr H^2_{10}$ (left plot) and a spherical needlet (right plot). The spherical needlet $\psi_{6,6141}$ is constructed using the filter function $h$ from Section~5 in \cite{Wang2017a} and is based on a (numerically determined) spherical $t$-design with $t=127$ as underlying cubature rule. This spherical design is available as file \texttt{ss127.08130} at \url{https://web.maths.unsw.edu.au/~rsw/Sphere/EffSphDes/ss.html}.}
	\label{fig:sh_vs_needlet}
\end{figure}

We give a brief sketch of the needlets' construction since our arguments in the following rely on some fine properties of the needlet expansion of a function.
Our presentation here is mainly based on the papers \cite{Baldi2009a} and \cite{Wang2017a}.

The first ingredient in the definition of needlets is a continuous and compactly supported function $h \colon [0,\infty) \to [0,\infty)$ which is referred to as a \emph{filter}.
We assume that $h$ satisfies 
\begin{align}
	&h \in C^\infty, \quad \mathrm{supp}(h) = [1/2,2],\label{eq:h:prop:1}\\
	&h^2(t) + h^2(2t) = 1, \quad t \in [1/2,2].\label{eq:h:prop:2}
\end{align}
For some of the properties stated below, the assumption $h \in C^\infty$ is more restrictive than necessary.
Given \eqref{eq:h:prop:1}, \eqref{eq:h:prop:2} is equivalent to the \emph{partition of unity} property for $h^2$,
\begin{equation*}
	\sum_{j=0}^{\infty} h^2 \left( \frac{t}{2^j} \right) = 1, \quad t \geq 1.
\end{equation*}
For a filter function $h$ and $\tau \geq 0$, we consider the filtered kernel $\kappa_{\tau,h}\colon \sphere^d \times \sphere^d \to \R$ defined by
\begin{equation}\label{eq:def:kappa}
	\kappa_{\tau,h}(x,y) = \begin{cases}
		1, & \text{if } 0 \leq \tau < 1,\\
		\sum_{\ell=0}^\infty h \left( \frac{\ell}{\tau} \right) N^d_\ell P_\ell^{(d+1)}(\langle x,y \rangle) , & \text{if } \tau \geq 1,
	\end{cases}
\end{equation}
where $\langle \boldsymbol{\cdot},\boldsymbol{\cdot} \rangle$ denotes the standard inner product on $\R^{d+1}$ and $P_\ell^{(d+1)}$ the normalized Gegenbauer polynomial defined by
\begin{equation*}
	P_\ell^{(d+1)}(t) = \frac{P_\ell^{(\frac{d-2}{2},\frac{d-2}{2})}(t)}{P_\ell^{(\frac{d-2}{2},\frac{d-2}{2})}(1)}
\end{equation*}
with $P_\ell^{(\alpha,\beta)}$ the Jacobi polynomial of degree $\ell$ for $\alpha,\beta > -1$.

The second ingredient in the construction of spherical needlets are cubature rules
\begin{equation*}
	\{ (\xi_{j,k}, w_{j,k}) \in \sphere^d \times(0,\infty) : j \in \N_0, k=1,\ldots,N_j \}
\end{equation*}
which are exact for functions $f \in \mathscr P^d_{2^{j+1}-1}$, that is,
\begin{equation*}
	\sum_{k=1}^{N_j} w_{j,k} f(\xi_{j,k}) = \int_{\sphere^d} f(x) \dd \mu(x), \quad f \in \mathscr P^d_{2^{j+1}-1}.
\end{equation*}
The weights of these cubature rules are assumed to be positive but not necessarily equal (but in principle, the underlying cubature rules can itself be chosen as equal-weight spherical $t$-designs as in Figure~\ref{fig:sh_vs_needlet}).
Then, for any $j \in \N_0$ and $k=1,\ldots,N_j$, spherical needlets $\psi_{j,k}\colon \sphere^d \to \R$ are, for $x \in \sphere^d$, defined by
\begin{equation*}
	\psi_{j,k}(x) = \sqrt{w_{j,k}} \, \kappa_{2^{j-1},h}(x,\xi_{j,k})
\end{equation*}
or, equivalently, by
\begin{align*}
	&\psi_{0,k}(x) = \sqrt{w_{0,k}},\\
	&\psi_{j,k}(x) = \sqrt{w_{j,k}} \sum_{j=0}^\infty h \left( \frac{\ell}{2^{j-1}} \right) N^d_\ell P_\ell^{(d+1)} (\langle x,\xi_{j,k} \rangle), \quad j = 1,2,\ldots
\end{align*}
Of course, this construction of needlet functions depends on the cubature points and their corresponding weights.
However, it can be imposed that
\begin{equation*}
	N_j \asymp 2^{jd}
\end{equation*}
which will be assumed from now on.
By construction, the $\psi_{j,k}$ are band-limited but not orthogonal.
More precisely, $\psi_{j,k}$ is a polynomial of degree $2^j -1$ and it holds that
\begin{equation*}\label{eq:orthogonality:psi_j_eta}
	\lvert j-j' \rvert \geq 2 \qquad \Rightarrow \qquad \langle \psi_{j,k},\psi_{j',k'} \rangle_{L^2(\sphere^d)} = 0.
\end{equation*}
Moreover, for any $p \in [1,\infty]$,
\begin{equation*}
	\lVert \psi_{j,k} \rVert_{L^p(\sphere^d)} \asymp 2^{jd(1/2-1/p)}.
\end{equation*}
The needlet coefficients $\beta_{j,k}$ of a function $f\colon \sphere^d \to \R$ are defined as
\begin{equation*}
	\beta_{j,k} = \langle f, \psi_{j,k} \rangle_{L^2(\sphere^d)} = \int_{\sphere^d} f(x) \psi_{j,k}(x) \dd \mu(x),
\end{equation*}
and a function $f \in L^2(\sphere^d)$ can be represented as
\begin{equation*}
	f = \sum_{j = 0}^\infty \sum_{k=1}^{N_j} \beta_{j,k} \psi_{j,k}.
\end{equation*}
By a slight abuse of notation, we denote the truncated needlet expansion including only the $\psi_{j,k}$ with $j \leq J$ by $f_J$,
\begin{equation*}
	f_J = \sum_{j=0}^J \sum_{k=1}^{N_j} \beta_{j,k} \psi_{j,k},
\end{equation*}
which can equivalently be written as
\begin{equation}\label{eq:f_J:int:kernel}
	f_J(x) =  K_{2^{J-1},H}f(x) \defeq  \int_{\sphere^d} f(y) \kappa_{2^{J-1},H}(x,y) \dd \mu (y),
\end{equation}
where the new filter $H\colon [0,\infty) \to [0,\infty)$ is defined in terms of the needlet filter $h$ as follows:
\begin{equation}\label{eq:def:H}
	H(t) = \begin{cases}
		1, & 0 \leq t < 1,\\
		h^2(t), & t \geq 1.
	\end{cases}
\end{equation}
Membership to spherical Besov spaces can be characterized by means of the needlet coefficients of a function $f$.
More precisely, $f$ belongs to the Besov space $B^s_{r,q}(\sphere^d)$ if and only if
\begin{equation*}
	\lVert f \rVert_{B_{r,q}^s(\sphere^d)} = \lVert (2^{j(s-d/r+d/2)} \lVert (\beta_{j,k})_{k=1}^{N_j} \rVert_{\ell^r} )_{j = 0}^\infty \rVert_{\ell^q} < \infty.
\end{equation*}
$\lVert \,\boldsymbol\cdot\, \rVert_{B_{r,q}^s(\sphere^d)}$ is referred to as the Besov norm.
Putting
\begin{equation*}
	E_L(f,r) = \inf_{p \in \Pscr_L^d} \lVert f - p \rVert_{L^r(\sphere^d)},
\end{equation*}
we have the equivalence
\begin{equation}\label{eq:equivalent:Besov:norm}
 	\lVert f \rVert_{B_{r,q}^s(\sphere^d)} \asymp \lVert f \rVert_{L^r(\sphere^d)} + \left( \sum_{j=0}^{\infty} (2^{js} E_{2^j}(f,r) )^q \right)^{1/q}
\end{equation}
of norms (see \cite{Narcowich2006a}, Chapter~5).
The Sobolev spaces $H^s_2(\sphere^d)$ introduced above coincide with the Besov spaces $B_{2,2}^s(\sphere^d)$ \cite{Baldi2009a}.
We denote by $B_{r,q}^s(\sphere^d,R)$ the ball of radius $R$ in the Besov space $B_{r,q}^s(\sphere^d)$.
We now apply the bound on the Le Cam distance derived in Theorem~\ref{thm:fixed:general} to establish asymptotic equivalence of the experiments $\Fexp^d_n$ and $\Gexp_n^d$ over such Besov balls.
Denote with $\ftilde_J$ the empirical needlet approximation defined by
\begin{align*}
	\ftilde_J = \sum_{j=0}^{J} \sum_{k=1}^{N_j} \betatilde_{j,k} \psi_{j,k},
\end{align*}
where
\begin{align*}
	\betatilde_{j,k} = \langle f,\psi_{j,k} \rangle_n = \frac{1}{n} \sum_{i=1}^{n} f(x_i)\psi_{j,k}(x_i)
\end{align*}
for the set $\XX = \{ x_1,\ldots,x_n \}$ of sampling points.
In the following, we assume that $\XX$ is a spherical $t$-design for $t \geq 3\cdot 2^{J}$ (this assumption is slightly stronger than the one in Theorem~\ref{thm:fixed:general}).
Of course, $\ftilde_J$ is the empirical counterpart of $f_J$ obtained by replacing the $L^2(\sphere^d)$-inner product with its empirical analogue relying on the equal-weight cubature rule associated with the spherical design $\XX$.
Equivalently, we can write
\begin{align}
	\ftilde_J(x) &= \frac{1}{n} \sum_{j = 0}^J \sum_{i=1}^n f(x_i) \kappa_{2^{j-1},h^2}(x_i,x)\notag \\
	&= \frac 1 n \sum_{i=1}^n f(x_i) \kappa_{2^{J-1},H}(x_i,x),\label{eq:ftilde:kernel:H}
\end{align}
see \cite{Wang2017a}, Equations~(36), (37), and (43).
The maximal resolution level $J=J(n)$ will be chosen such that
\begin{equation*}\label{eq:choice:J}
	2^J \asymp n^{1/d},
\end{equation*}
which coincides, except for a missing logarithmic term, with the choice of this truncation parameter in adaptive non-parametric estimation using needlets \cite{Baldi2009a,Monnier2011}.

\begin{theorem}\label{thm:fixed:Besov}
Assume that $\XX = \{ x_1,\ldots,x_n \}$ is a spherical $t$-design with $t \geq 3 \cdot 2^J$, $J \in \N_0$.
Then, for $\Theta=B^s_{r,q}(\sphere^d,R)$ with $s>d/r$,
\begin{equation*}
	\Delta(\Fexp^d_{n},\Gexp^d_{n}) \lesssim \sigma^{-1} n^{1/2}2^{-J(s-d/r+d/2)}. 
\end{equation*}
In particular, choosing $\XX$ as a spherical $t$-design with $t=3\cdot 2^{J}$ and cardinality $n$ of the minimal possible order $n \asymp 2^{Jd}$, yields
\begin{equation*}
	\Delta(\Fexp^d_{n},\Gexp^d_{n}) \lesssim \sigma^{-1} 2^{-J(s-d/r)} \asymp \sigma^{-1} n^{-s/d+1/r},
\end{equation*}
and the experiments $\Fexp^d_{n}$ and $\Gexp^d_{n}$ are asymptotically equivalent as $n \to \infty$.
\end{theorem}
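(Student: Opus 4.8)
The plan is to read off the result from the general bound of Theorem~\ref{thm:fixed:general} together with the two block estimates already established for $\Delta_1$ and $\Delta_2$. First I would check that the hypotheses of Theorem~\ref{thm:fixed:general} are satisfied with the needlet family $\{\psi_{j,k} : 0 \le j \le J,\ 1 \le k \le N_j\}$ in the role of $\{\psi_1,\ldots,\psi_m\}$ and $m = \sum_{j=0}^J N_j$. Since each $\psi_{j,k}$ is a polynomial of degree at most $2^J-1$, the span $S$ lies in $\mathscr P^d_{2^J-1}$, so the requirement $t \ge 2L$ with $L = 2^J-1$ amounts to $t \ge 2(2^J-1)$, which is implied by the standing assumption $t \ge 3\cdot 2^J$. (The stronger bound $t \ge 3\cdot 2^J$ is what the derivations of $\Delta_1$ and $\Delta_2$ actually consume, through exactness of the equal-weight cubature up to degree $3\cdot 2^J$.) Theorem~\ref{thm:fixed:general}, applied with $\ftilde_J$ in place of $\ftilde_m$, then yields $\Delta(\Fexp^d_n,\Gexp^d_n) \le \Delta_1 + \Delta_2$.

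Next I would simply add the two estimates \eqref{eq:Besov:Delta1} and \eqref{eq:bound:Delta_2:Besov}, each of order $\sigma^{-1} n^{1/2} 2^{-J(s-d/r+d/2)}$, to obtain the first displayed bound of the theorem. For the sharper second statement I would specialize to a minimal spherical $(3\cdot 2^J)$-design, whose existence with cardinality $n \asymp (3\cdot 2^J)^d \asymp 2^{Jd}$ is guaranteed by \cite{Bondarenko2013Optimal}. Substituting $n^{1/2} \asymp 2^{Jd/2}$ then collapses the exponent,
\begin{equation*}
	\sigma^{-1} n^{1/2} 2^{-J(s-d/r+d/2)} \asymp \sigma^{-1} 2^{Jd/2 - J(s-d/r+d/2)} = \sigma^{-1} 2^{-J(s-d/r)},
\end{equation*}
and, using $2^J \asymp n^{1/d}$, rewriting $2^{-J(s-d/r)} = (2^{Jd})^{-(s-d/r)/d} \asymp n^{-s/d+1/r}$ gives the claimed rate.

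Finally, since $s > d/r$ the exponent $s-d/r$ is strictly positive, so $2^{-J(s-d/r)} \to 0$ as $J \to \infty$, equivalently as $n \to \infty$; hence $\Delta(\Fexp^d_n,\Gexp^d_n) \to 0$ and the two experiments are asymptotically equivalent. With the block bounds in hand the argument is pure exponent arithmetic, so the genuine difficulty sits upstream, in the $\Delta_2$ estimate: the high-frequency aliasing term had to be bounded uniformly in $j>J$ by combining the $\tau$- and $x$-uniform $L^1$-bound on the filtered kernel $\kappa_{2^{J-1},H}$ (from \cite{Wang2017a}) with two applications of the Marcinkiewicz--Zygmund inequality of Lemma~\ref{lemma:Dai06}. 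I would also record that the standing restriction $r \le 2$ costs nothing, as the case $r \ge 2$ reduces to $r=2$ through the Besov embedding $B^s_{r,q}(\sphere^d) \subseteq B^s_{2,q}(\sphere^d)$ noted in \cite{Baldi2009a}.
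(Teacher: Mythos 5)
Your proposal is correct and follows essentially the same route as the paper: the paper's proof of Theorem~\ref{thm:fixed:Besov} consists precisely of invoking Theorem~\ref{thm:fixed:general} with the truncated needlet system and adding the two previously established bounds \eqref{eq:Besov:Delta1} and \eqref{eq:bound:Delta_2:Besov}, followed by the same exponent arithmetic for a minimal design with $n \asymp 2^{Jd}$. Your observations on why $t \geq 3\cdot 2^J$ (rather than merely $t \geq 2(2^J-1)$) is required, and on the reduction of $r \geq 2$ to $r=2$ via the Besov embedding, match the paper's remarks.
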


\section{Regression on random uniform designs}\label{s:random:design}

The aim of this section is to establish asymptotic equivalence between the Gaussian white noise model $\Gexp_n^d$ given by \eqref{eq:general:wn} and random uniform design regression with model equations
\begin{equation}\label{eq:regression:random:design}
	Z_i = f(X_i) + \sigma \epsilon_i, \quad i=1,\ldots,n,
\end{equation}
where $X_1,\ldots,X_n$ are i.i.d.\ $\sim \Uc(\sphere^d)$ but all the other quantities are defined as in the fixed design regression model $\Fexp_n^d$ defined by \eqref{eq:general:regression}.
In this section, we restrict ourselves to the smoothness class $\Theta = H^s_2(\sphere^d,R)$.
The corresponding statistical experiment is denoted by $\Rexp_n^d=\Rexp_n^d(\Theta)$ and the following result states the asymptotic equivalence of $\Rexp_n^d$ and $\Gexp_n^d$ under the same smoothness assumption $s>d/2$ as in the fixed design case.

\begin{theorem}\label{thm:random:Sobolev}
	Let $\Theta=H_2^s(\sphere^d,R)$ with $s>d/2$.
	Consider the random design regression experiment $\Rexp_n^d$ defined by \eqref{eq:regression:random:design}.
	For all sufficiently large $n \in \N$, let $L \in \N$ be maximal such that
	\begin{equation*}
		\sum_{\ell=0}^{L} N^d_\ell \leq \kappa_2 \, \frac{n}{\ln(n)}
	\end{equation*}
	where $\kappa_2 = (3\ln(1.5)-1)/6 \approx 0.036$.
	Then, for any $L_0 \in \N$ with $L_0 \leq L$, we have¸ 
	\begin{equation*}
		\Delta(\Rexp^d_{n},\Gexp^d_{n}) \lesssim n^{-1} L_0^{2d} + \sigma^{-1} L_0^{d/2-s}R + \sigma^{-1}L_0^{-2s}R^2 + \sigma^{-1}n^{1/2}L^{-s}R + n^{-2}.
	\end{equation*}
	In particular, choosing $L_0=L_0(n)$ and $L=L(n)$ with $L_0 \leq L$ such that in addition
	\begin{equation*}
		L_0 \to \infty, \quad n^{-1}L_0^{2d} \to 0, \quad n^{1/2}L^{-s} \to 0
	\end{equation*}
	implies that
	\begin{equation*}
		\Delta(\Rexp^d_{n},\Gexp^d_{n}) \to 0,
	\end{equation*}
	and the experiments $\Rexp^d_{n}$ and $\Gexp^d_{n}$ are asymptotically equivalent as $n \to \infty$.
\end{theorem}
The proof of Theorem~\ref{thm:random:Sobolev}, which is given in Section~\ref{s:proof:thm:random:Sobolev}, is (like the one of Theorem~\ref{thm:fixed:general}) constructive in the sense that it provides concrete Markov kernels that allow to transform observations between the considered experiments.
The proof relies on separate calculations for low- and high-frequency coefficients and is similar to the proof of the analogous result in the multivariate Euclidean case given in \cite{Reiss2008Asymptotic}. However, several additional technical tools (a generalization of the classical $QR$ decomposition, results from representation theory like Schur's lemma, reverse Hölder inequalities for spherical harmonics, a Taylor series expansion of the matrix valued function $A \mapsto (I+A)^{1/2}$) are necessary. The interaction of these tools is certainly of independent interest and might be useful to establish further extensions of the present result in future work.

\section{Asymptotic non-equivalence}\label{sec:non-equivalence}

We now show that the asymptotic equivalence results of Sections~\ref{s:fixed:design} and \ref{s:random:design} and  are essentially optimal in the sense that the conditions $s > d/2$ (for Sobolev balls) and $s>d/r$ (for Besov balls) cannot further be weakened.
For this purpose it will turn out sufficient to reduce the framework to the case where the common parameter space $\Theta$ for both the regression and the Gaussian white noise experiment consists of two elements only.
More precisely, we will construct subsets $\Theta_n' \subset \Theta$ with $\lvert \Theta_n' \rvert = 2$ such that the observations in the regression model become indistinguishable over $\Theta_n'$ whereas in the Gaussian white noise model the total variation distance between the two potential distributions is uniformly bounded (as a function of the sample size $n$) from below by a positive constant.
From this, asymptotic non-equivalence between the two experiments can be followed by a standard argument.
The construction of the hypotheses in the two experiments is inspired by the approach in \cite{Wang2016} where the optimal recovery of smooth functions on spheres from function values was studied (especially, the ideas used in the proof of Lemma~3.5 of that reference turn out to be useful for the proof of the following result).

\begin{theorem}\label{thm:non-equivalence}
	Consider $\Theta = H_2^s(\sphere^d,R)$ with $s \leq d/2$ or $\Theta = B^s_{r,q}(\sphere^d, R)$ with $s \leq d/r$.
	Let $\XX$ be any (deterministic or random) point set with $\lvert \XX \rvert \asymp n$.
	Denote with $\Gexp_n^d$ the Gaussian white noise experiment given by \eqref{eq:general:wn} and with $\Eexp_n^d$ the regression experiment defined by \eqref{eq:intro:regression} on the design $\XX$ with regression function $f\in \Theta$.
	Then,
	\begin{equation}\label{eq:bound:non-equivalence}
		\Delta(\Eexp_n^d,\Gexp_n^d) \geq \delta(\Eexp_n^d,\Gexp_n^d) \gtrsim 1.
	\end{equation}
	In particular, the experiments $\Eexp_n^d$ and $\Gexp_n^d$ are \emph{not} asymptotically equivalent.
\end{theorem}

\begin{remark}
The statement of Theorem~\ref{thm:non-equivalence} is in coincidence with usual embedding theorems which state that $H_2^s(\sphere^d)$ and $B^s_{r,q}(\sphere^d)$ can be continuously embedded into the space $C(\sphere^d)$ of continuous functions on the sphere provided that $s>d/2$ and $s>d/r$, respectively (see \cite{Narcowich2006a} and \cite{Baldi2009a} for details).
In this light, the requirements $s>d/2$ and $s>d/r$ are natural for any method based on function values. The proof of Theorem~\ref{thm:non-equivalence} even shows that asymptotic equivalence does not hold if one restricts the function spaces to functions having a finite series expansion (in the proof, we consider $\Theta_n' \subset \Pscr^d_L$ for some $L=L(n)$ with $L\ \to \infty$).
\end{remark}

\section{Discussion}\label{sec:discussion}

We have proven that non-parametric regression with spherical regressors is asymptotically equivalent, in the sense of Le Cam, to a corresponding Gaussian white noise experiment.
The results hold for both the fixed design case, where the sampling points form a spherical $t$-design, and the random uniform design case.
As special cases of function classes, for which asymptotic equivalence holds, Sobolev and Besov balls were considered and the smoothness assumptions imposed to establish asymptotic equivalence over these function spaces were shown to be sharp.

The derived results suggest that both spherical $t$-designs and random uniform designs are a good choice of sampling points in non-parametric regression as the resulting statistical experiment is asymptotically equivalent to the Gaussian white noise model which is usually regarded as the simplest model of the form $\texttt{data} = \texttt{signal} + \texttt{noise}$.
The established symptotic equivalences suggest that known results for the Gaussian white noise model on the sphere (for instance, the sharp minimax results from \cite{Klemelae1999Asymptotic} already cited in the introduction) are valid also for both the fixed and the random design regression framework with spherical regressors.
Besides estimation procedures as considered in \cite{Klemelae1999Asymptotic} also (sharp) non-parametric testing rates and confidence bands might now be transferable from the idealized Gaussian white noise model to the more realistic regression models.

The interpretation of the considered designs as a good choice of sampling points is in line with existing results on optimal designs in finite-dimensional linear regression models.
For truncated expansions in spherical harmonics, the papers \cite{Dette2005} (for the two-dimensional sphere) and \cite{Dette2019} (for spheres of arbitrary dimension) identify the uniform distribution as an optimal design distribution with respect to the $\Phi_p$-criteria introduced by Kiefer in \cite{Kiefer1974}.
Such a design given by an absolutely continuous distribution cannot be implemented directly in practice.
In \cite{Dette2019}, Remark~3.1, the authors mention spherical $t$-designs as a potential remedy to address this issue.
Even more recently, \cite{Haines2024} discusses the use of spherical $t$-designs as optimal designs for the special case of the two-dimensional sphere.
So-called $\lambda$-designs, which extend the notion of spherical $t$-designs to Riemannian manifolds, are identified as optimal designs for regression on Lie groups in \cite{Chakraborty2026}.

In this work, we have restricted ourselves to the case of spheres of arbitrary dimension for two reasons: (i) the restriction to this special case keeps the technical jargon to a minimum but already incorporates all the essential ideas, (ii) the case of regression with spherical regressors, especially for the two-dimensional sphere $\sphere^2$, is certainly the most relevant one in applications.
We conjecture that main parts of our analysis can be carried over to non-parametric regression on more general compact Riemannian manifolds.
Regression models on manifolds and the presumably equivalent Gaussian white noise model are, for instance, considered in \cite{Castillo2014} in the context of non-parametric Bayesian estimation.
Another work in this direction is \cite{Kerkyacharian2011} where confidence bands for needlet density estimators on compact homogeneous manifolds have been derived.

\section{Proofs}\label{sec:proofs}

\subsection{Proof of Theorem~\ref{thm:fixed:general}}

	We first derive a uniform upper bound on $\Delta(\Fexp_{n}^d,\widetilde\Gexp^d_{n})$, that is, the Le Cam distance between the regression experiment $\Fexp_{n}^d$ and the intermediate experiment $\widetilde\Gexp^d_{n}$.
	For this, we first consider the deficiency $\delta(\Fexp_{n}^d,\widetilde\Gexp^d_{n})$.
	Assume that an observation $Z = (Z_1,\ldots,Z_n)^\transpose$ from the regression experiment is given.
	Setting $\langle v,\psi_j \rangle_n=n^{-1} \sum_{i=1}^n v_i \psi_j(x_i)$ for $v=(v_1,\ldots,v_n)^\top \in \R^n$, we define the random process
	\begin{align*}
		\widetilde Z_m = \sum_{j=1}^{m} \langle Z,\psi_j \rangle_n \psi_j &= \sum_{j=1}^{m} \langle f,\psi_j \rangle_n \psi_j + \sigma \sum_{j=1}^{m} \langle \epsilon,\psi_j \rangle_n \psi_j= \ftilde_m + \frac{\sigma}{\sqrt n}\,\zeta_m.
	\end{align*}
	The random variables $\langle \epsilon,\psi_j \rangle_n = n^{-1}\sum_{i=1}^{n} \epsilon_i \psi_j(x_i)$ are Gaussian with mean zero.
	Combining the inclusion $S \cdot S \subseteq \mathscr P^d_{L} \cdot \mathscr P^d_{L} \subseteq \mathscr P^d_{2L}$ with the assumption that $\XX$ is a spherical $t$-design for $t \geq 2L$ implies that
	\begin{equation*}
		\Eb [\langle \epsilon,\psi_j \rangle_n \langle \epsilon,\psi_k \rangle_n] = n^{-1} \langle \psi_j,\psi_k \rangle_{L^2(\sphere^d)}.
	\end{equation*}
	By linearity, this shows that the process $\zeta_m$ is standard Gaussian white noise on $S$.
	By adding Gaussian white noise, scaled by the same factor $\sigma/\sqrt n$, on the $L^2(\sphere^d)$-orthogonal complement of $S$, we obtain the process
	\begin{equation*}
		\widetilde Z = \ftilde_m + \frac{\sigma}{\sqrt n}\, \zeta,
	\end{equation*}
	where $\zeta$ denotes a standard Gaussian white noise process.
	In differential notation, the process $\widetilde Z$ can equivalently be written as
	\begin{equation*}
		\dd \widetilde Z(x) = \ftilde_m(x) \dd \mu(x) + \frac{\sigma}{\sqrt n} \, \dd W(x),
	\end{equation*}
	showing that
	\begin{equation}\label{eq:bound:deficiency:regression:intermediate}
		\delta(\Fexp_{n}^d,\Gtildeexp^d_{n}) = 0.
	\end{equation}
	
	Conversely, given the continuous observation \eqref{eq:intermediate} from the intermediate experiment and choosing an orthonormal basis $\{ \varphi_1,\ldots,\allowbreak\varphi_{\dim S}\}$ of $S$,
	the random vector $\thetahat = (\thetahat_{1},\ldots,\thetahat_{\dim S})^\top$ with
	\begin{equation*}
		\thetahat_{j} = \int_{\sphere^d} \varphi_j(x)\dd \widetilde Z(x), \quad j=1,\ldots,\dim S,
	\end{equation*}
	follows a multivariate Gaussian distribution with mean
	\begin{equation*}
		\left( \int \ftilde_m(x)\varphi_1(x)\dd\mu(x),\ldots,\int \ftilde_m(x)\varphi_{\dim S}(x)\dd \mu(x) \right)^\transpose
	\end{equation*}
	and covariance matrix $\sigma^2 n^{-1} I_{\dim S}$.
	Consider the $n \times \dim S$ design matrix $X=(\varphi_j(x_i))$.
	The vector $\widehat Z = (\widehat Z_1,\ldots,\widehat Z_n)^\top = X \thetahat$ of fitted values at the sampling points $x_1,\ldots,x_n$ follows a multivariate Gaussian with mean vector
	\begin{equation}\label{eq:evalations:hyperinterpolant}
		(\ftilde_m(x_1),\ldots,\ftilde_m(x_n))^\transpose
	\end{equation}
	and covariance matrix $\sigma^2n^{-1}XX^\top$.
	Consider a mean zero multivariate Gaussian random vector $\xi$ with covariance matrix
	\begin{equation*}
		\Cov(\xi)= \sigma^2 (I_n - X(X^\top X)^{-1}X^\top) = \sigma^2 (I_n - n^{-1} XX^\top),
	\end{equation*}
	independent of $\widehat Z$ (the matrix $\sigma^2 (I_n - X(X^\top X)^{-1}X^\top)$ corresponds to the covariance matrix of the residuals in the linear model given by $Y=X\theta+\epsilon$ and is therefore positive semi-definite; the fact that the design points form a spherical $t$-design then implies that $(X^\top X)^{-1}=n^{-1}I_{\dim S}$).
	It follows that the vector $Z' = \widehat Z + \xi$ follows a multivariate Gaussian with mean \eqref{eq:evalations:hyperinterpolant}
	and covariance matrix $\sigma^2 I_n$.
	Hence, the deficiency $\delta(\Gtildeexp^d_{n},\Fexp_{n}^d)$ can be bounded by taking the supremum over all $f$ of the total variation distance between the multivariate Gaussian vectors $Z$ and $Z'$,
	\begin{align}
		\delta(\Gtildeexp^d_{n},\Fexp_{n}^d) &\leq \sup_{f \in \Theta} \left[ 1 - 2\Phi \left( -\frac{\sqrt n}{2\sigma}\, \lVert f- \ftilde_m \rVert_n \right)\right],\label{eq:bound:deficiency:intermediate:regression}
	\end{align}
	see, for instance, \cite{Nielsen2024}.
	Combining \eqref{eq:bound:deficiency:regression:intermediate} and \eqref{eq:bound:deficiency:intermediate:regression} yields
	\begin{equation}\label{eq:Le:Cam:regression:intermediate}
		\Delta(\Fexp_{n}^d,\Gtildeexp^d_{n}) \leq 1 - 2\Phi \left( -\frac{\sqrt n}{2\sigma} \sup_{f \in \Theta} \, \lVert f- \ftilde_m \rVert_n \right).
	\end{equation}
	Next, we derive a uniform upper bound on the Le Cam distance $\Delta(\Gtildeexp^d_{n},\Gexp^d_{n})$ between the intermediate experiment $\Gtildeexp^d_{n}$ and the Gaussian white noise experiment $\Gexp^d_{n}$.
	Denote by $\Pb_f$ the distribution of the process $Z$ in the experiment $\Gexp^d_{n}$, and by $\Pb_{\ftilde_m}$ the distribution of the process $\widetilde Z$ in the experiment $\Gtildeexp^d_{n}$, respectively.
	Then, \eqref{eq:app:bound:tv:gwnm} yields
	\begin{equation*}
		V(\Pb_f,\Pb_{\ftilde_m}) = 1 - 2 \Phi \left( - \frac{\sqrt n}{2\sigma}\,\lVert f - \ftilde_m \rVert_{L^2(\sphere^d)} \right),
	\end{equation*}
	from which it follows that
	\begin{equation}\label{eq:Le:Cam:intermediate:WN}
		\Delta(\Gtildeexp^d_{n},\Gexp^d_{n}) \leq 1 - 2 \Phi  \left( - \frac{\sqrt n}{2\sigma} \sup_{f \in \Theta}\, \lVert f - \ftilde_m \rVert_{L^2(\sphere^d)} \right).
	\end{equation}
	By combining \eqref{eq:Le:Cam:regression:intermediate} and \eqref{eq:Le:Cam:intermediate:WN} the bound for $\Delta(\Fexp^d_{n},\Gexp^d_{n})$ announced in the theorem follows from the triangle inequality for the Le Cam distance.

\subsection{Proof of Theorem~\ref{thm:fixed:Sobolev}}

	In the following, we treat the terms $\Delta_1$ and $\Delta_2$ separately under the assumption that $\XX = \{ x_1,\ldots,x_n \}$ is a spherical $t$-design with $t \geq 2L$.
	
	\subsubsection*{Bound for $\Delta_1$ when $\Theta=H_2^s(\sphere^d,R)$}
	
	Recall that $\ftilde_m$ is defined by \eqref{eq:def:ftilde_m:Sobolev} in the statement of Theorem~\ref{thm:fixed:Sobolev}. We have
	\begin{align*}
		\lVert f- \ftilde_m \rVert_n \leq \lVert f-\Pi_{\Pscr^d_L} f\rVert_n.
	\end{align*}
	Putting
	\begin{equation*}
		A_j f = \Pi_{\Pscr^d_{2^j L}}f - \Pi_{\Pscr^d_{2^{j-1} L}}f
	\end{equation*}
	for $j \in \N$ we can write
	\begin{equation*}
		f(x) - \Pi_{\Pscr^d_L} f(x) = \sum_{j=1}^{\infty} A_j f(x),
	\end{equation*}
	and the series on the right-hand side converges uniformly for $f \in H_2^s(\sphere^d,R)$ when $s > d/2$.
	Using Lemma~\ref{lemma:Dai06} with $p_0=p=2$ (note that $A_jf \in \Pscr^d_{2^j L}$ by definition), we obtain for $f \in H_2^s(\sphere^d,R)$ that
	\begin{align*}
		\lVert A_j f \rVert_n^2 &= \frac{1}{n}\sum_{i=1}^n \lvert A_j f(x_i) \rvert^2\lesssim 2^{jd} \int_{\sphere^d} \lvert A_j f(x)\rvert^2 \dd \mu(x)\lesssim 2^{jd}2^{-2js}L^{-2s}R^2.
	\end{align*}
	As a consequence, we obtain by means of the triangle inequality that, for $s>d/2$,
	\begin{align*}
		\lVert f - \Pi_{\Pscr^d_L}f \rVert_n \leq \sum_{j=1}^{\infty} \lVert A_j f \rVert_n\lesssim L^{-s}R \sum_{j=1}^{\infty} 2^{-j(s-d/2)} \lesssim L^{-s}R.
	\end{align*}
	It follows that for $\Theta=H_2^s(\sphere^d,R)$, the term $\Delta_1$ in Theorem~\ref{thm:fixed:general} can be bounded as
	\begin{equation}\label{eq:bound:Delta_1:Sobolev}
		\Delta_1 \lesssim \sigma^{-1} n^{1/2} L^{-s}R.
	\end{equation}
	
	\subsubsection*{Bound for $\Delta_2$ when $\Theta=H_2^s(\sphere^d,R)$}
	
	Under the assumptions of Theorem~\ref{thm:fixed:general}, applying \cite{Lu2023}, Theorem~1.2, Equation~(1.6), yields directly that, for $\Theta=H_2^s(\sphere^d,R)$ with $s>d/2$,
	\begin{align}
		\Delta_2 &\leq 1 - 2\Phi  \left( - \frac{\sqrt n}{2\sigma} \sup_{f \in \Theta} \, \lVert f - \ftilde_m \rVert_{L^2(\sphere^d)} \right)\notag\\[2mm]
		&\lesssim \sigma^{-1} n^{1/2} \sup_{f \in \Theta} \, \lVert f - \ftilde_m \rVert_{L^2(\sphere^d)}\notag\\
		&\lesssim \sigma^{-1} n^{1/2}L^{-s}R.\label{eq:bound:Delta_2:Sobolev}
	\end{align}
	Combining the bounds \eqref{eq:bound:Delta_1:Sobolev} and \eqref{eq:bound:Delta_2:Sobolev} implies the statement of Theorem~\ref{thm:fixed:Sobolev}.

\subsection{Proof of Theorem~\ref{thm:fixed:Besov}}

	As for the Sobolev case, the proof of Theorem~\ref{thm:fixed:Besov} relies on finding suitable upper bounds for the quantities $\Delta_1$ and $\Delta_2$ in the proof of Theorem~\ref{thm:fixed:general}.
	In the following analysis, we assume that $r \leq 2$. The result of the theorem equally holds true for the case $r \geq 2$ and directly follows from the case $r=2$ via the Besov embedding $B^s_{r,q}(\sphere^d) \subseteq B^s_{2,q}(\sphere^d)$ for $r \geq 2$ (see \cite{Baldi2009a}, Theorem~5).
	
	\subsubsection*{Bound for $\Delta_1$ when $\Theta=B_{r,q}^s(\sphere^d,R)$}
	
	With $\ftilde_J$ taking the role of $\ftilde_m$ in Theorem~\ref{thm:fixed:general}, that is, setting $m=\sum_{j=0}^J N_j$, we obtain
	\begin{align*}
		\Delta_1 \leq \sigma^{-1} n^{1/2} \sup_{f \in \Theta} \, \lVert f-\ftilde_J \rVert_n.
	\end{align*}
	To bound $\lVert f-\ftilde_J \rVert_n$ uniformly for $f \in \Theta= B_{r,q}^s(\sphere^d,R)$, we use the decomposition
	\begin{align*}
		\lVert f-\ftilde_J \rVert_n &= \lVert f_J -\ftilde_J + f - f_J \rVert_n\\
		&\leq \lVert f_J -\ftilde_J \rVert_n + \lVert f - f_J \rVert_n\\
		&= \lVert f_J -\ftilde_J \rVert_{L^2(\sphere^d)} + \lVert f - f_J \rVert_n\\
		&\leq \lVert f -\ftilde_J \rVert_{L^2(\sphere^d)} + \lVert f-f_J \rVert_{L^2(\sphere^d)} + \lVert f - f_J \rVert_n,
	\end{align*}
	where we have used that $\XX$ is a spherical $t$-design with $t \geq 2 \cdot (2^J -1)$.
	Below, in the analysis of $\Delta_2$, we will show that
	\begin{equation}\label{eq:Besov:Delta1:1}
		\lVert f - \ftilde_J \rVert_{L^2(\sphere^d)} \lesssim 2^{-J(s-d/r+d/2)}.
	\end{equation}
	From \cite{Baldi2009a}, p.~3383, we obtain
	\begin{equation}\label{eq:Besov:Delta1:2}
		\lVert f-f_J \rVert_{L^2(\sphere^d)} \lesssim 2^{-J(s-d/r+d/2)}.
	\end{equation}
	Finally, by Lemma~\ref{lemma:Dai06} and again the estimate from \cite{Baldi2009a}, p.~3383, we obtain that
	\begin{align}
		\lVert f-f_J \rVert_n &\lesssim \sum_{j > J} \left( \left( \frac{2^j}{2^J} \right)^d \int_{\sphere^d} \lvert \sum_{k=1}^{N_j} \beta_{j,k} \psi_{j,k}(x) \rvert^2 \dd \mu(x) \right)^{1/2}\notag\\
		&= 2^{-Jd/2} \sum_{j > J}  2^{jd/2} \lVert \sum_{k=1}^{N_j} \beta_{j,k} \psi_{j,k} \rVert_{L^2(\sphere^d)}\notag\\
		&\lesssim 2^{-Jd/2} \sum_{j > J}  2^{jd/2} \cdot 2^{-j(s-d/r+d/2)}\notag\\
		&= 2^{-Jd/2} \sum_{j > J}  2^{-j(s-d/r)}\notag\\
		&\asymp 2^{-J(s-d/r+d/2)}.\label{eq:Besov:Delta1:3}
	\end{align}
	Combining \eqref{eq:Besov:Delta1:1}, \eqref{eq:Besov:Delta1:2}, and \eqref{eq:Besov:Delta1:3}, we obtain
	\begin{equation}\label{eq:Besov:Delta1}
		\Delta_1 \lesssim \sigma^{-1} n^{1/2} 2^{-J(s-d/r+d/2)}.
	\end{equation}
	
	\subsubsection*{Bound for $\Delta_2$ when $\Theta=B_{r,q}^s(\sphere^d,R)$}
	
	We now derive a bound for $\lVert f - \ftilde_J \rVert_{L^2(\sphere^d)}$.
	Using Equation~\eqref{eq:ftilde:kernel:H}, we obtain
	\begin{align*}
		f(x) - \ftilde_J(x) &= \sum_{j = 0}^\infty \sum_{k=1}^{N_j} \beta_{j,k}\psi_{j,k}(x) - \frac{1}{n} \sum_{i=1}^n f(x_i) \kappa_{2^{J-1},H}(x_i,x)\\
		&=\sum_{j>J}^\infty \sum_{k=1}^{N_j} \beta_{j,k} \psi_{j,k}(x)\\
		&\phantom{=}- \sum_{j>J+1}^\infty \frac{1}{n} \sum_{i=1}^n \left( \sum_{k=1}^{N_j} \beta_{j,k} \psi_{j,k}(x_i) \right) \kappa_{2^{J-1},H}(x_i,x),   
	\end{align*}
	where we use both the fact that the equal-weight cubature rule associated with the spherical $t$-design $\XX$ is exact for polynomials of degree $\leq 3\cdot 2^J$ and identity \eqref{eq:f_J:int:kernel}.
	It follows that
	\begin{align*}
		\lVert f - \ftilde_J \rVert_{L^2(\sphere^d)} &\leq \sum_{j > J} \left\| \sum_{k=1}^{N_j} \beta_{j,k} \psi_{j,k} \right\|_{L^2(\sphere^d)}\notag\\
		&\phantom{=}+ \sum_{j > J+1} \left\|  \frac{1}{n} \sum_{i=1}^n \left( \sum_{k=1}^{N_j} \beta_{j,k} \psi_{j,k}(x_i) \right) \kappa_{2^{J-1},H}(x_i,\boldsymbol{\cdot}) \right\| _{L^2(\sphere^d)}.\label{eq:Besov:Delta2:dec}
	\end{align*}
	From \cite{Baldi2009a}, we immediately obtain that the first term on the right-hand side can be bounded as
	\begin{equation}\label{eq:Besov:Delta2:1}
		\sum_{j > J} \left\|  \sum_{k=1}^{N_j} \beta_{j,k} \psi_{j,k} \right\|_{L^2(\sphere^d)} \lesssim 2^{-J(s-d/r+d/2)}.
	\end{equation}
	Let us now consider the second term.
	\cite{Wang2017a}, Theorem~3.3, yields that for $H \in C^\infty$ the inequality
	\begin{equation*}
		\lVert \kappa_{\tau,H}(x,\boldsymbol{\cdot}) \rVert_{L^1(\sphere^d)} \leq C
	\end{equation*}
	holds with a constant $C = C(d,H)$ that does neither depend on $\tau$ nor $x$.
	Using this inequality yields
	\begin{align*}
		&\left\|  \frac{1}{n} \sum_{i=1}^n \left( \sum_{k=1}^{N_j} \beta_{j,k} \psi_{j,k}(x_i) \right) \kappa_{2^{J-1},H}(x_i,\boldsymbol{\cdot}) \right\|_{L^2(\sphere^d)}^2\\
		&= \int_{\sphere^d} \bigg\lvert \frac{1}{n} \sum_{i=1}^n \left( \sum_{k=1}^{N_j} \beta_{j,k} \psi_{j,k}(x_i) \right) \kappa_{2^{J-1},H}(x_i,x) \bigg\rvert^2 \dd \mu(x)\\[2mm]
		&\leq \int_{\sphere^d} \left( \frac{1}{n} \sum_{i=1}^{n} \lvert \sum_{k=1}^{N_j} \beta_{j,k} \psi_{j,k}(x_i) \rvert \, \lvert \kappa_{2^{J-1},H}(x_i,x) \rvert \right)^2 \dd \mu(x)\\
		&\lesssim \sup_{x \in \sphere^d} \, \lVert \kappa_{2^{J-1},H}( \boldsymbol{\cdot},x) \rVert_{L^1(\sphere^d)} \int_{\sphere^d} \frac 1 n \sum_{i=1}^n \lvert \sum_{k=1}^{N_j} \beta_{j,k} \psi_{j,k}(x_i) \rvert^2 \lvert \kappa_{2^{J-1},H}(x_i,x) \rvert \dd \mu(x)\\
		&\lesssim \frac 1 n \sum_{i=1}^n \lvert \sum_{k=1}^{N_j} \beta_{j,k} \psi_{j,k}(x_i) \rvert^2 \, \sup_{i=1,\ldots,n} \lVert \kappa_{2^{J-1},H}(x_i,\boldsymbol{\cdot}) \rVert_{L^1(\sphere^d)}\\
		&\lesssim \left( \frac{2^j}{2^J} \right)^d \left\| \sum_{k=1}^{N_j} \beta_{j,k} \psi_{j,k} \right\| ^2_{L^2(\sphere^d)},
	\end{align*}
	where we use Lemma~\ref{lemma:Dai06} twice (in each case with $p_0=2$ but first with $p=1$ and then with $p=2$).
	Hence,
	\begin{align}
		\sum_{j > J+1} &\left\|  \frac{1}{n} \sum_{i=1}^n \left( \sum_{k=1}^{N_j} \beta_{j,k} \psi_{j,k}(x_i) \right) \kappa_{2^{J-1},H}( x_i,\boldsymbol{\cdot}) \right\|_{L^2(\sphere^d)}\notag\\
		&\lesssim \sum_{j > J+1} \left( \frac{2^j}{2^J} \right)^{d/2} \left\|  \sum_{k=1}^{N_j} \beta_{j,k} \psi_{j,k} \right\| _{L^2(\sphere^d)}\notag\\
		&\lesssim 2^{-Jd/2} \sum_{j > J+1} 2^{jd/2} \cdot 2^{-j(s-d/r+d/2)}\notag\\
		&= 2^{-Jd/2} \sum_{j > J+1} 2^{-j(s-d/r)}\notag\\
		&\asymp 2^{-J(s-d/r+d/2)}.\label{eq:Besov:Delta2:2}
	\end{align}
	Combining the bounds \eqref{eq:Besov:Delta2:1} and \eqref{eq:Besov:Delta2:2}, we obtain, uniformly for $f \in \Theta$,
	\begin{equation*}
		\lVert f - \ftilde_J \rVert_{L^2(\sphere^d)} \lesssim 2^{-J(s-d/r+d/2)},
	\end{equation*}
	which implies that
	\begin{align}
		\Delta_2 &\leq 1 - 2\Phi  \left( - \frac{\sqrt n}{2\sigma} \sup_{f \in \Theta} \, \lVert f - \ftilde_J \rVert_{L^2(\sphere^d)} \right)\notag\\[2mm]
		&\lesssim \sigma^{-1} n^{1/2} \sup_{f \in \Theta} \, \lVert f - \ftilde_J \rVert_{L^2(\sphere^d)}\notag\\
		&\lesssim \sigma^{-1} n^{1/2} 2^{-J(s-d/r+d/2)}\label{eq:bound:Delta_2:Besov}.
	\end{align}
	Combining the bounds \eqref{eq:Besov:Delta1} and \eqref{eq:bound:Delta_2:Besov} finishes the proof of Theorem~\ref{thm:fixed:Besov}.

\subsection{Proof of Theorem~\ref{thm:random:Sobolev}}\label{s:proof:thm:random:Sobolev}

As in the fixed design case we denote with $\langle \boldsymbol\cdot,\boldsymbol\cdot\rangle_n$ the empirical inner product defined by $$\langle f,g\rangle_n = \frac{1}{n}\sum_{i=1}^n f(X_i)g(X_i).$$
and write $\lVert \,\boldsymbol\cdot\, \rVert_n$ for the associated norm.
$\Pi^n_V$ denotes the orthogonal projection on $V$ with respect to $\langle \boldsymbol\cdot,\boldsymbol\cdot\rangle_n$.

We first notice that the random design regression experiment $\Rexp_n^d$ given by Equation~\eqref{eq:regression:random:design} is asymptotically equivalent to the experiment $\check \Rexp_n^d$ with \eqref{eq:regression:random:design} replaced by
\begin{equation*}
	\check Z_i = \Pi_{\Pscr^d_L}f(X_i) + \sigma \epsilon_i, \quad i=1,\ldots,n.
\end{equation*}
Indeed, by conditioning on the design $\XX$, we have for $Z=(Z_1,\ldots,Z_n)^\top$ and $\check Z = (\check Z_1,\ldots,\check Z_n)^\top$ the bound
\begin{align*}
	V(\Lc(Z|\XX),\Lc(\check Z | \XX)) &= 1 - 2\Phi \left( -\frac{\sqrt n}{2\sigma} \, \lVert f-\Pi_{\Pscr^d_L} f\rVert_n \right)  \\[2mm]
	&\lesssim \sigma^{-1}n^{1/2} \lVert f-\Pi_{\Pscr^d_L} f\rVert_n.
\end{align*}
Using the conditioning property for $f$-divergences (see \cite{Polyanskiy2024}, p.~117) and Jensen's inequality, we obtain, uniformly for $f \in \Theta$,
\begin{align*}
	V^2(\Lc(Z,\XX),\Lc(\check Z,\XX)) &= (\Eb [V(\Lc(Z|\XX),\Lc(\check Z|\XX))])^2\\[1mm]
	&\leq \Eb [V^2(\Lc(Z|\XX),\Lc(\check Z|\XX))]\\
	&\lesssim \sigma^{-2} \sum_{i=1}^{n} \Eb [ (f(X_i)-\Pi_{\Pscr^d_L} f(X_i))^2 ]\\
	&= \sigma^{-2} n \lVert f - \Pi_{\Pscr^d_L} f \rVert^2_{L^2(\sphere^d)}\\[2mm]
	&\lesssim  \sigma^{-2}n L^{-2s}R^2. 
\end{align*}
Hence,
\begin{equation*}
	\Delta(\Rexp^d_n,\check \Rexp^d_{n}) \lesssim \sigma^{-1} n^{1/2} L^{-s}R.
\end{equation*}
Analogously, we bound the Le Cam distance between the Gaussian white noise experiment $\Gexp_n^d$ and the truncated experiment $\widetilde \Gexp_n^d$ given by
\begin{equation*}
	\Ztilde = \Pi_{\Pscr^d_L} f(x)\,\dd \mu(x) + \frac{\sigma}{\sqrt n} \dd W(x).
\end{equation*}
In this case, a direct application of inequality \eqref{eq:app:bound:tv:gwnm} yields that
\begin{align*}
	\Delta(\widetilde\Gexp^d_{n}, \Gexp^d_{n}) &\leq 1 - 2 \Phi \left( - \frac{\sqrt n}{2\sigma}\, \sup_{f \in \Theta}\, \lVert f-\Pi_{\Pscr^d_L} f \rVert_{L^2(\sphere^d)} \right)\\
	&\lesssim \sigma^{-1}n^{1/2}\, \sup_{f \in \Theta}\, \lVert f-\Pi_{\Pscr^d_L} f \rVert_{L^2(\sphere^d)}\\
	&\leq \sigma^{-1}n^{1/2}L^{-s}R.
\end{align*}
After this reduction to two experiments with truncated parameter, it remains to find a bound for the Le Cam distance $\Delta(\check \Rexp^d_n,\widetilde \Gexp_n^d)$, that is, we can assume without loss of generality that $f \in \Pscr_L^d$ for the rest of the proof without further reference. 

In order to obtain the bound for $\Delta(\check \Rexp^d_n,\widetilde \Gexp_n^d)$ we consider three further intermediate experiments (denoted by $\Iexp_{n,1}^d$, $\Iexp_{n,2}^d$, and $\Iexp_{n,3}^d$ below).
To state these experiments, we first introduce some notation.
Define the event
\begin{equation*}
	\Omega_L = \left\lbrace  \frac 1 2 \, \lVert f \rVert_{L^2(\sphere^d)}^2 \leq \lVert f \rVert_{n}^2 \leq  \frac 3 2 \, \lVert f \rVert_{L^2(\sphere^d)}^2 \quad \forall f \in \mathsf \Pscr^d_L \right\rbrace.
\end{equation*}

Similar to the proof of Theorem~\ref{thm:fixed:general}, we denote with $X \in \R^{n \times D}$ the design matrix $(Y_j(X_i))$ where $j=\iota(\ell,m)$ for the enumeration function introduced in Section~\ref{subsec:spherical_harmonics} and $Y_1,\ldots,Y_D$ are spherical harmonics up to resolution level $L$, that is, $D=\iota(L,N^d_L)$ and $Y_1,\ldots,Y_D$ are an $L^2(\sphere^d)$-orthonormal basis of $\Pscr^d_L$.
If the matrix $X \in \R^{n \times D}$ has full column rank (this is always the case on $\Omega_L$), we apply the generalized thin $QR$ decomposition (described in detail in Section~\ref{s:QR:decomposition}) to obtain a matrix $Q \in \R^{n \times D}$ with orthonormal columns and an upper triangular block matrix $R$ such that
\begin{equation*}
	X=QR.
\end{equation*}
Here, the choice of the blocks is as in Proposition~\ref{prop:R}, that is, the diagonal blocks are in one-to-one correspondence with the eigenspaces $\Hscr^d_\ell$, $\ell=1,\ldots,L$.
We define functions $Y_1^n,\ldots,Y_D^n$ by
\begin{equation*}
	\begin{pmatrix}
		Y_1^n(x)\\
		\vdots\\
		Y_D^n(x)
	\end{pmatrix} = \sqrt n \, (R^{-1})^\transpose \begin{pmatrix}
		Y_1(x)\\
		\vdots\\
		Y_D(x)
	\end{pmatrix}, \quad x \in \sphere^d.
\end{equation*}
The fact that $(R^{-1})^\transpose$ is a block lower triangular matrix implies that $Y_j^n \in \Pscr^d_L$ for $j = 1,\ldots,D$.
Consider the mapping $T\colon \Pscr^d_L \to \Pscr^d_L$ defined by $TY_j = Y_j^n$ for $j=1,\ldots,D$.
Since the matrix $Q$ in the generalized thin $QR$ decomposition has orthonormal columns, $Y_1^n,\ldots,Y_D^n$  are orthonormal with respect to the empirical scalar product $\langle \boldsymbol\cdot,\boldsymbol\cdot \rangle_n$ and $T\colon (\Pscr^d_L,\lVert \,\boldsymbol\cdot\, \rVert_{L^2(\sphere^d)}) \to (\Pscr^d_L, \lVert \,\boldsymbol\cdot\, \rVert_n)$ is an isometry.
More precisely, we have
\begin{equation}\label{eq:rel:Tinv:R}
	\langle T^{-1}Y_j,Y_k \rangle_{L^2(\sphere^d)} = \langle Y_j, Y_k^n \rangle_n = n^{-1/2}\, Q_{1:n,k}^\transpose X_{1:n,j} = n^{-1/2} R_{k,j}
\end{equation}
where $R_{k,j}$ is the entry of $R$ in the $k$-th row and $j$-th column.

Set $D_0=\iota(L_0,N^d_{L_0})$ for the intermediate resolution level $L_0 \leq L$.
Departing from observations in the random uniform design regression experiment $\check\Rexp_n^d$ with regression function $f \in \Pscr^d_L \cap \Theta$, we define, given the design $\XX$, a first intermediate continuous experiment $\Iexp_{n,1}^d$ with observation $\Ztilde_1$ given by
\begin{align}\label{eq:def:Ztilde_1}
	\Ztilde_1 = &\sum_{j=1}^{D_0} \langle \check Z,Y_j^n \rangle_n Y_j^n + \sum_{j=D_0+1}^{D} \langle \check Z,Y_j^n \rangle_n Y_j + \frac{\sigma}{\sqrt n} \zeta_1, 
\end{align}
where $\zeta_1$ is standard Gaussian white noise on the complement of $\Pscr_{L}^d$.
Note that the observations $\check Z = (\check Z_1,\ldots,\check Z_n)$ in the experiment $\check\Rexp_n^d$ and $\Ztilde_1$ in the experiment $\Iexp_{n,1}^d$ can be transferred into one another provided that $X$ has full column rank.
Obviously, \eqref{eq:def:Ztilde_1} defines $\Ztilde_1$ in terms of $\Zcheck$ under this assumption. 
Vice versa, observations following the same distribution as $\Zcheck$ can be generated from \eqref{eq:def:Ztilde_1} by transforming the high-frequency part by application of the mapping $T$ (that is, $Y_j$ is replaced with $Y_j^n$) and then using the same argument as in the proof of Theorem~\ref{thm:fixed:general}.
Defining Markov kernels between the underlying measurable spaces arbitrarily on the null set where $X$ does not have full column rank then implies $\Delta(\check\Rexp_n^d,\Iexp_{n,1}^d)=0$.

In addition to $\Iexp_{n,1}^d$, we define two further intermediate experiments $\Iexp_{n,2}^d$ and $\Iexp_{n,3}^d$, that are both defined, conditional on the design $\XX$, by observations $\Ztilde_2$ and $\Ztilde_3$ defined as follows:
\begin{align*}
	\Ztilde_2 &= \sum_{j=1}^{D_0} \langle f,Y_j^n \rangle_n Y_j^n +  \sum_{j=D_0+1}^{D} \langle f,Y_j^n \rangle_n Y_j + \frac{\sigma}{\sqrt n} \zeta_2,\\ 
	\Ztilde_3 &= \sum_{j=1}^{D_0} \langle f,Y_j \rangle_{L^2(\sphere^d)} Y_j + \sum_{j=D_0+1}^{D} \langle f,Y_j^n \rangle_n Y_j + \frac{\sigma}{\sqrt n} \zeta_3.
\end{align*}
Here, both $\zeta_2$ and $\zeta_3$ denote standard Gaussian white noise on $L^2(\sphere^d)$.
In the following, we will bound the Le Cam distance $\Delta(\check \Rexp_n^d, \widetilde \Gexp_n^d)$ by means of the triangle inequality,
\begin{align*}
	\Delta(\check \Rexp_n^d, \widetilde \Gexp_n^d) &\leq \Delta(\check \Rexp_n^d, \Iexp_{n,1}^d) + \Delta(\Iexp_{n,1}^d,\Iexp_{n,2}^d) + \Delta(\Iexp_{n,2}^d,\Iexp_{n,3}^d) + \Delta(\Iexp_{n,3}^d,\widetilde \Gexp_{n}^d)\\
	&=\Delta(\Iexp_{n,1}^d,\Iexp_{n,2}^d) + \Delta(\Iexp_{n,2}^d,\Iexp_{n,3}^d) + \Delta(\Iexp_{n,3}^d,\widetilde \Gexp_{n}^d).
\end{align*}
In order to bound these three terms, it is sufficient to work on the event $\Omega_L$, since by \eqref{app:prob:bound:omega:complement} we have the bound
\begin{align*}
	\Delta(\Iexp_{n,1}^d,\Iexp_{n,2}^d) &\leq V(\Lc(\Ztilde_1,\XX),\Lc(\Ztilde_2,\XX))\\
	&\leq \Eb [ V(\Lc(\Ztilde_1|\XX),\Lc(\Ztilde_2|\XX)) (\1_{\Omega_L} + \1_{\Omega_L^\complement})]\\
	&\lesssim \Eb [ V(\Lc(\Ztilde_1|\XX),\Lc(\Ztilde_2|\XX)) \1_{\Omega_L}] + \Pb(\Omega_L^\complement)\\
	&\lesssim \Eb [ V(\Lc(\Ztilde_1|\XX),\Lc(\Ztilde_2|\XX)) \1_{\Omega_L}] + n^{-2}
\end{align*}
(of course, the terms $\Delta(\Iexp_{n,2}^d,\Iexp_{n,3}^d)$ and $\Delta(\Iexp_{n,3}^d,\widetilde \Gexp_{n}^d)$ can be treated analogously).
It remains to find appropriate bounds for the terms
\begin{align*}
	&\Eb [ V(\Lc(\Ztilde_1|\XX),\Lc(\Ztilde_{2}|\XX)) \1_{\Omega_L}],\\
	&\Eb [ V(\Lc(\Ztilde_2|\XX),\Lc(\Ztilde_{3}|\XX)) \1_{\Omega_L}],
\end{align*}
and
\begin{equation*}
	\Eb [ V(\Lc(\Ztilde_3|\XX),\Lc(\Ztilde|\XX)) \1_{\Omega_L}],
\end{equation*}
which will finish the proof.
Before we consider these three terms separately, we remember that the transformations used to define $\Ztilde_1$, $\Ztilde_2$, and $\Ztilde_3$ are only well-defined if $X$ has full column rank.
As already mentioned above, this condition holds true on the event $\Omega_L$.
If $X$ does not have full column rank, one can define the Markov kernels that transform between the considered experiments arbitrarily.

\paragraph{Bound on $\Eb [ V(\Lc(\Ztilde_1|\XX),\Lc(\Ztilde_{2}|\XX)) \1_{\Omega_L}]$} 
Following the notation introduced in Section~\ref{s:QR:decomposition}, we denote with $X_{1:n,\underline{1}:\underline{L}_0}$ the submatrix of $X$ consisting only of the first $D_0$ columns.
Note that $\Ztilde_1$ can be written as
\begin{align*}
	\Ztilde_1 &= \sum_{j=1}^{D_0} \langle f,Y_j^n \rangle_n Y_j^n + \sum_{j=1}^{D_0} \beta'_j Y_j+ \sum_{j=D_0+1}^{D} \langle f,Y_j^n \rangle_n Y_j + \frac{\sigma}{\sqrt n}\, \zeta_1, 
\end{align*}
where the mean zero vector $\beta'= (\beta'_1,\ldots,\beta'_{D_0})^\top$ is defined by
\begin{equation*}
	\beta' = (X_{1:n,\underline{1}:\underline{L}_0}^\transpose X_{1:n,\underline{1}:\underline{L}_0})^{-1}X_{1:n,\underline{1}:\underline{L}_0}^\transpose \epsilon
\end{equation*}
for $\epsilon=(\epsilon_1,\ldots,\epsilon_n)^\top \sim \Nc(0,1)^{\otimes n}$ and has covariance matrix $$\Cov(\betahat) = \sigma^2 (X_{1:n,\underline{1}:\underline{L}_0}^\transpose X_{1:n,\underline{1}:\underline{L}_0})^{-1}.$$
The processes $\Ztilde_1$ and $\Ztilde_2$ have the same mean but different covariance structure, and applying \cite{Devroye2018}, Theorem~1.1, combined with Equation~(2) from the same reference, we obtain that
\begin{align*}
	V(\Lc(\Ztilde_1|\XX),\Lc(\Ztilde_2|\XX))\1_{\Omega_L} &\lesssim \lVert n^{-1} X_{1:n,\underline{1}:\underline{L}_0}^\transpose X_{1:n,\underline{1}:\underline{L}_0} - I_{D_0} \rVert_{\mathrm{F}}
\end{align*}
where $\lVert \,\boldsymbol\cdot\, \rVert_{\mathrm{F}}$ denotes the Frobenius norm of a matrix.
Taking expectations and using the addition formula \eqref{eq:addition:formula} we obtain that
\begin{align*}
	\Eb [ V(\Lc(\Ztilde_1|\XX),\Lc(\Ztilde_{2}|\XX)) \1_{\Omega_L}] &\lesssim \Eb \lVert n^{-1} X_{1:n,\underline{1}:\underline{L}_0}^\transpose X_{1:n,\underline{1}:\underline{L}_0} - I_{D_0} \rVert_{\mathrm{F}}\\
	&= \frac{1}{n} \sum_{\ell=0}^{L_0} \sum_{\ell'=0}^{L_0} \sum_{m=1}^{N^d_\ell} \sum_{m'=1}^{N^d_{\ell'}} \Var (Y_{\ell,m}(X_1)Y_{\ell',m'}(X_1))\\
	&\leq \frac{1}{n} \sum_{\ell=0}^{L_0} \sum_{\ell'=0}^{L_0} \sum_{m=1}^{N^d_\ell} \sum_{m'=1}^{N^d_{\ell'}} \int_{\sphere^d} Y_{\ell,m}^2(x) Y_{\ell',m'}^2(x)\mu(\dd x)\\
	&\asymp \frac{1}{n} \left( \sum_{\ell=0}^{L_0} N_\ell^d \right)^2\\
	&\asymp n^{-1} L_0^{2d}.
\end{align*}

\paragraph{Bound on $\Eb [ V(\Lc(\Ztilde_2|\XX),\Lc(\Ztilde_{3}|\XX)) \1_{\Omega_L}]$} 

Note that on the event $\Omega_L$ we have that
\begin{equation*}
	\lVert n (X_{1:n,\underline{1}:\underline{L}_0}^\transpose X_{1:n,\underline{1}:\underline{L}_0})^{-1} \rVert_2 \lesssim 1.
\end{equation*}
By combining this bound with Equation~\eqref{eq:app:bound:tv:gwnm}, we have
\begin{align*}
	V(\Lc(\Ztilde_2|\XX),\Lc(\Ztilde_3|\XX)) \1_{\Omega_L} &\lesssim \frac{\sqrt n}{\sigma} \, \lVert \sum_{j=1}^{D_0} \langle f,Y_j^n \rangle_n Y_j^n - \sum_{j=1}^{D_0} \langle f,Y_j \rangle_{L^2(\sphere^d)} Y_j \rVert_{L^2(\sphere^d)} \1_{\Omega_L}\\
	&= \frac{\sqrt n}{\sigma} \, \lVert \sum_{j=1}^{D_0} \langle f,Y_j^n \rangle_n Y_j^n - \Pi_{\Pscr^d_{L_0}}f \rVert_{L^2(\sphere^d)} \1_{\Omega_L}\\
	&= \frac{\sqrt n}{\sigma} \, \lVert (X_{1:n,\underline{1}:\underline{L}_0}^\transpose X_{1:n,\underline{1}:\underline{L}_0})^{-1} X_{1:n,\underline{1}:\underline{L}_0}^\transpose v \rVert \1_{\Omega_L}\\
	&\lesssim \frac{1}{\sigma\sqrt n} \, \lVert X_{1:n,\underline{1}:\underline{L}_0}^\transpose v \rVert,
\end{align*}
where $v=(f(X_1),\ldots,f(X_n))^\transpose-(\Pi^d_{L_0}f(X_1),\ldots,\Pi^d_{L_0}f(X_n))^\transpose$.
Using the addition formula \eqref{eq:addition:formula}, we obtain
\begin{align*}
	\Eb \lVert X_{1:n,\underline{1}:\underline{L}_0}^\transpose v \rVert^2 &= \sum_{\ell=0}^{L_0} \sum_{m=1}^{N^d_\ell} \Eb \lvert \sum_{i=1}^{n} Y_{\ell,m}(X_i) \sum_{\ell' = L_0+1}^L \sum_{m'=1}^{N^d_{\ell'}} \theta_{\ell',m'} Y_{\ell',m'}(X_i) \rvert^2\\
	&= n \sum_{\ell=0}^{L_0} \sum_{m=1}^{N^d_\ell} \Eb \lvert Y_{\ell,m}(X_1) \sum_{\ell' = L_0+1}^L \sum_{m'=1}^{N^d_{\ell'}} \theta_{\ell',m'} Y_{\ell',m'}(X_1) \rvert^2\\
	&= n \Eb \left[ \sum_{\ell=0}^{L_0} \sum_{m=1}^{N^d_\ell} Y_{\ell,m}^2(X_1) \,  \left( \sum_{\ell' = L_0+1}^L \sum_{m'=1}^{N^d_{\ell'}} \theta_{\ell',m'} Y_{\ell',m'}(X_1) \right) ^2 \right]\\
	&\asymp n L_0^d \sum_{\ell' = L_0+1}^L \sum_{m'=1}^{N^d_{\ell'}} \theta_{\ell',m'}^2\\
	&\leq nL_0^{d-2s}R^2,
\end{align*}
from which we conclude that
\begin{align*}
	\Eb [ V(\Lc(\Ztilde_2|\XX),\Lc(\Ztilde_{3}|\XX)) \1_{\Omega_L}] \lesssim \sigma^{-1}  L_0^{d/2-s}R.
\end{align*}

\paragraph{Bound on $\Eb [ V(\Lc(\Ztilde_3|\XX),\Lc(\Ztilde|\XX)) \1_{\Omega_L}]$} 

We have
\begin{align*}
	&V^2(\Lc(\Ztilde_3|\XX),\Lc(\Ztilde|\XX))\\ &\lesssim \sigma^{-2}n \, \lVert \sum_{j=D_0+1}^D (\langle f,Y_j^n \rangle_n - \langle f,Y_j \rangle_{L^2(\sphere^d)}) Y_j \rVert^2_{L^2(\sphere^d)}\\
	&= \sigma^{-2}n \, \lVert \sum_{j=D_0+1}^D \langle T^{-1}f-f,Y_j \rangle_{L^2(\sphere^d)} Y_j \rVert^2_{L^2(\sphere^d)}\\
	&= \sigma^{-2}n \, \langle  (\Pi_{\Pscr_L^d} - \Pi_{\Pscr^d_{L_0}})(T^{-1}f-f),  (\Pi_{\Pscr_L^d} - \Pi_{\Pscr^d_{L_0}})(T^{-1}f-f) \rangle_{L^2(\sphere^d)}.
\end{align*}
By \eqref{eq:rel:Tinv:R}, the matrix corresponding to the mapping $T^{-1}$ (in terms of the spherical harmonics $Y_1,\ldots,Y_D$) is the block upper triangular matrix $R$ which implies that
\begin{equation*}
	 (\Pi_{\Pscr^d_L} - \Pi_{\Pscr^d_{L_0}})T^{-1}f= (\Pi_{\Pscr^d_L} - \Pi_{\Pscr^d_{L_0}}) T^{-1}(f-\Pi_{\Pscr^d_{L_0}} f).
\end{equation*}
Hence,
\begin{align*}
 &\langle (\Pi_{\Pscr^d_L} - \Pi_{\Pscr^d_{L_0}}) T^{-1}f, (\Pi_{\Pscr^d_L} - \Pi_{\Pscr^d_{L_0}}) T^{-1}f \rangle_{L^2(\sphere^d)}\\
 \leq~& \langle T^{-1}(f-\Pi_{\Pscr^d_{L_0}} f),T^{-1}(f-\Pi_{\Pscr^d_{L_0}} f) \rangle_{L^2(\sphere^d)}.
\end{align*}
Combining the identity
\begin{equation*}
	\Eb \langle T^{-1}\Pi_{\Hscr^d_\ell}f,T^{-1}\Pi_{\Hscr^d_{\ell'}}f \rangle_{L^2(\sphere^d)}=\langle \Pi_{\Hscr^d_\ell}f,\Pi_{\Hscr^d_{\ell'}}f \rangle_{L^2(\sphere^d)} = \lVert \Pi_{\Hscr^d_{\ell}}f \rVert_{L^2(\sphere^d)}^2 \delta_{\ell,\ell'}
\end{equation*}
for $\ell,\ell' \in \N_0$, $\ell,\ell' \leq L$,
with Proposition~\ref{prop:R} yields that
\begin{align*}
	\Eb V^2(\Lc(\Ztilde_3|\XX),\Lc(\Ztilde|\XX))
	\lesssim \sigma^{-2}n \, \sum_{\ell=L_0+1}^L \Eb  \langle \Pi_{\Hscr^d_\ell}f - T^{-1} \Pi_{\Hscr^d_\ell}f, \Pi_{\Hscr^d_\ell}f \rangle_{L^2(\sphere^d)}.
\end{align*}
We decompose
\begin{align*}
\Eb \langle \Pi_{\Hscr^d_\ell} f - T^{-1}\Pi_{\Hscr^d_\ell} f, \Pi_{\Hscr^d_\ell} f \rangle_{L^2(\sphere^d)}&= \Eb \langle \Pi_{\Hscr^d_\ell} f - T^{-1}\Pi_{\Hscr^d_\ell} f, \Pi_{\Hscr^d_\ell} f \rangle_{L^2(\sphere^d)} \1_{\Omega_L^\complement}\\
	&\phantom{=}+ \Eb \langle \Pi_{\Hscr^d_\ell} f - T^{-1}\Pi_{\Hscr^d_\ell} f, \Pi_{\Hscr^d_\ell} f \rangle_{L^2(\sphere^d)} \1_{\Omega_L}.
\end{align*}
The Cauchy-Schwarz inequality yields
\begin{align*}
	&\Eb \langle \Pi_{\Hscr^d_\ell} f - T^{-1}\Pi_{\Hscr^d_\ell} f, \Pi_{\Hscr^d_\ell} f \rangle_{L^2(\sphere^d)} \1_{\Omega_L^\complement}\\
	&~\leq (\Eb \langle \Pi_{\Hscr^d_\ell} f - T^{-1}\Pi_{\Hscr^d_\ell} f, \Pi_{\Hscr^d_\ell} f \rangle_{L^2(\sphere^d)}^2)^{1/2} \cdot (\Pb(\Omega_L^\complement))^{1/2}.
\end{align*}
Combining the Cauchy-Schwarz inequality, the estimate $(a\pm b)^2 \leq 2a^2+2b^2$, and the fact that $T$ is an isometry yields that
\begin{align*}
	&\Eb \langle \Pi_{\Hscr^d_\ell} f - T^{-1}\Pi_{\Hscr^d_\ell} f, \Pi_{\Hscr^d_\ell} f \rangle_{L^2(\sphere^d)}^2\\
	\leq~& 2 \Eb [ \lVert \Pi_{\Hscr^d_\ell} f\rVert^4_{L^2(\sphere^d)} + \langle T^{-1}\Pi_{\Hscr^d_\ell} f,\Pi_{\Hscr^d_\ell} f \rangle^2_{L^2(\sphere^d)} ]\\
	\leq~& 2\lVert \Pi_{\Hscr^d_\ell} f\rVert^4_{L^2(\sphere^d)} + 2 \lVert \Pi_{\Hscr^d_\ell} f \rVert_{L^2(\sphere^d)}^2 \Eb [\lVert T^{-1}\Pi_{\Hscr^d_\ell} f \rVert^2_{L^2(\sphere^d)} ]\\
	\leq~& 4\lVert \Pi_{\Hscr^d_\ell} f\rVert^4_{L^2(\sphere^d)}.
\end{align*}
Hence, by combining the last estimate with Equation~\eqref{app:prob:bound:omega:complement}, we get
\begin{equation*}
	\Eb \langle \Pi_{\Hscr^d_\ell} f - T^{-1}\Pi_{\Hscr^d_\ell} f, \Pi_{\Hscr^d_\ell} f \rangle_{L^2(\sphere^d)} \1_{\Omega_L^\complement} \lesssim n^{-1} \lVert \Pi_{\Hscr^d_\ell} f\rVert^4_{L^2(\sphere^d)}. 
\end{equation*}
Identifying the projection $\Pi_{\Hscr^d_\ell} f$ with the coefficient vector
\begin{equation*}
	\theta_{\ell,\boldsymbol \cdot}=(\theta_{\ell,1},\ldots,\theta_{\ell,N_\ell^d})^\top \in \R^{N^d_\ell},
\end{equation*}
we can write
\begin{align*}
	\langle \Pi_{\Hscr^d_\ell} f - T^{-1}\Pi_{\Hscr^d_\ell} f, \Pi_{\Hscr^d_\ell} f \rangle_{L^2(\sphere^d)}
	&~=~ \theta_{\ell,\boldsymbol \cdot}^\transpose \theta_{\ell,\boldsymbol \cdot} - n^{-1/2} \theta_{\ell,\boldsymbol \cdot}^\transpose R_{\underline\ell,\underline\ell} \theta_{\ell,\boldsymbol \cdot}\\
	&~=~ \theta_{\ell,\boldsymbol \cdot}^\transpose (I_{N^d_\ell}- n^{-1/2}R_{\underline\ell,\underline\ell}) \theta_{\ell,\boldsymbol \cdot}\\
	&~=~ \theta_{\ell,\boldsymbol \cdot}^\transpose ( I_{N^d_\ell}- M_\ell^{1/2} )  \theta_{\ell,\boldsymbol \cdot},
\end{align*}
where the matrix $M_\ell \in \R^{N^d_\ell \times N^d_\ell}$ is defined as
\begin{equation*}
	M_\ell \defeq n^{-1} X_{1:n,\underline{\ell}}^\top  X_{1:n,\underline{\ell}} - n^{-1} X_{1:n,\underline{\ell}}^\top Q_{1:n,\underline{1}:\underline{\ell-1}}Q_{1:n,\underline{1}:\underline{\ell-1}}^\top X_{1:n,\underline{\ell}}.
\end{equation*}
On $\Omega_L$ we have
\begin{equation*}
	\lVert n^{-1} X_{1:n,\underline{\ell}}^\top  X_{1:n,\underline{\ell}} - I_{N^d_\ell} \rVert_2 \leq \frac{1}{2},
\end{equation*}
which implies that the spectrum of $M_\ell$ is contained in the interval $[0,3/2]$.
Consequently, the spectrum of $M_\ell- I_{N^d_\ell}$ is contained in $[-1,1/2]$ on $\Omega_L$.
Using the Taylor series expansion of the matrix function $A \mapsto (I+A)^{1/2}$ (which converges for $A$ with spectrum contained in $[-1,1]$; see \cite{Higham2008}, Theorem~4.7), we obtain that 
\begin{align*}
	( I_{N^d_\ell} - M_\ell^{1/2} ) \1_{\Omega_L}
	&= \left[  I_{N^d_\ell} - \sum_{k=0}^\infty \binom{1/2}{k} (M_\ell-I_{N^d_\ell})^k \right]  \1_{\Omega_L}\\
	&= \sum_{k=1}^\infty \binom{1/2}{k} (M_\ell-I_{N^d_\ell})^k  \1_{\Omega_L}.
\end{align*}
By taking expectations we obtain
\begin{align*}
	&\Eb \langle \Pi_{\Hscr^d_\ell} f - T^{-1}\Pi_{\Hscr^d_\ell} f, \Pi_{\Hscr^d_\ell} f \rangle_{L^2(\sphere^d)} \1_{\Omega_L}= \frac{1}{2} \theta_{\ell,\boldsymbol \cdot}^\transpose \Eb [M_\ell-I_{N^d_\ell}] \theta_{\ell,\boldsymbol \cdot}\\
	&\quad- \frac{1}{2}  \Eb [\theta_{\ell,\boldsymbol \cdot}^\transpose(M_\ell-I_{N^d_\ell}) \theta_{\ell,\boldsymbol \cdot} \1_{\Omega_L^\complement}] + \Eb \theta_{\ell,\boldsymbol \cdot}^\transpose \sum_{k=2}^\infty \binom{1/2}{k} (M_\ell-I_{N^d_\ell})^k \theta_{\ell,\boldsymbol \cdot} \1_{\Omega_L}.
\end{align*}
First, since $\Eb X_{1:n,\underline{\ell}}^\top  X_{1:n,\underline{\ell}} = n I_{N^d_\ell}$, we obtain
\begin{equation*}
	\frac{1}{2} \theta_{\ell,\boldsymbol \cdot}^\transpose \Eb [M_\ell-I_{N^d_\ell}] \theta_{\ell,\boldsymbol \cdot} \leq 0.
\end{equation*}
Second, by \eqref{app:prob:bound:omega:complement}, Proposition~\ref{prop:reverse_Hoelder}, and Proposition~\ref{prop:estimate:norm:emp:proj}, \ref{it:further:b}, 
\begin{align*}
	\Eb &[\lvert\theta_{\ell,\boldsymbol \cdot}^\transpose(M_\ell-I_{N^d_\ell}) \theta_{\ell,\boldsymbol \cdot} \1_{\Omega_L^\complement}\rvert]\\
	&\leq (\Eb [(\theta_{\ell,\boldsymbol \cdot}^\transpose(M_\ell-I_{N^d_\ell}) \theta_{\ell,\boldsymbol \cdot})^2])^{1/2} (\Pb(\Omega_L^\complement))^{1/2}\\
	&\lesssim (\Var(\lVert \Pi_{\Hscr_\ell^d} f \rVert_n^2) + \Eb \lVert \Pi^n_{\Pscr^d_{\ell-1}} \Pi_{\Hscr_\ell^d} f \rVert_n^4)^{1/2}   n^{-1}\\
	&\leq (n^{-1}\Eb((\Pi_{\Hscr_\ell^d} f(X_1))^4) + \Eb \lVert \Pi^n_{\Pscr^d_{\ell-1}} \Pi_{\Hscr_\ell^d} f \rVert_n^4)^{1/2}n^{-1}\\
	&\lesssim (n^{-1}\lVert \Pi_{\Hscr_\ell^d} f \rVert_{L^2(\sphere^d)}^4 \ell^{s(d)} + \lVert \Pi_{\Hscr_\ell^d} f \rVert_{L^2(\sphere^d)}^4 (\ell^d n^{-1}+\ell^{2d-2}n^{-2}))^{1/2} n^{-1},
\end{align*}
where, according to Proposition~\ref{prop:reverse_Hoelder}, we can take
\begin{equation*}
	s(d) = \begin{cases}
		(d-1)/2, & \text{if } d \in \{ 2,3 \},\\
		d-2, & \text{if } d \geq 4. 
	\end{cases}
\end{equation*}
Hence, for $\ell \leq L$,
\begin{align*}
\Eb [\lvert\theta_{\ell,\boldsymbol \cdot}^\transpose(M_\ell-I) \theta_{\ell,\boldsymbol \cdot} \1_{\Omega_L^\complement}\rvert] &\lesssim (n^{-1/2} \ell^{s(d)/2} + \ell^{d/2}n^{-1/2} + \ell^{d-1}n^{-1}) \lVert \Pi_{\Hscr_\ell^d} f \rVert_{L^2(\sphere^d)}^2 n^{-1}\\
	&\lesssim n^{-3/2}\ell^{d/2} \lVert \Pi_{\Hscr_\ell^d} f \rVert_{L^2(\sphere^d)}^2.
\end{align*}
Third, the bound
\begin{equation*}
	\bigg\lvert\binom{1/2}{k}\bigg\rvert \lesssim k^{-3/2}
\end{equation*}
yields
\begin{equation*}
	\bigg\lVert\sum_{k=2}^\infty \binom{1/2}{k} (M_\ell-I_{N^d_\ell})^{k-2} \1_{\Omega_L}\bigg\rVert_2 \lesssim 1,
\end{equation*}
from which we obtain that
\begin{align*}
	&\theta_{\ell,\boldsymbol \cdot}^\transpose \sum_{k=2}^\infty \binom{1/2}{k} (M_\ell-I_{N^d_\ell})^k \theta_{\ell,\boldsymbol \cdot} \1_{\Omega_L}\\
	=~& \theta_{\ell,\boldsymbol \cdot}^\transpose (M_\ell-I_{N^d_\ell}) \sum_{k=2}^\infty \binom{1/2}{k} (M_\ell-I_{N^d_\ell})^{k-2} (M_\ell-I_{N^d_\ell})\theta_{\ell,\boldsymbol \cdot} \1_{\Omega_L}\\
	\lesssim~& \theta_{\ell,\boldsymbol \cdot}^\transpose (M_\ell-I_{N^d_\ell})^2 \theta_{\ell,\boldsymbol \cdot} \1_{\Omega_L}\\
	=~& \lVert (M_\ell-I_{N^d_\ell})\theta_{\ell,\boldsymbol \cdot} \rVert^2 \1_{\Omega_L}\\
	\lesssim~& \lVert (n^{-1}X_{1:n,\underline{\ell}}^\transpose X_{1:n,\underline{\ell}}-I_{N^d_\ell})\theta_{\ell,\boldsymbol \cdot} \rVert^2+ \lVert n^{-1}X_{1:n,\underline{\ell}}^\transpose Q_{1:n,\underline{1}:\underline{\ell-1}} Q_{1:n,\underline{1}:\underline{\ell-1}}^\transpose X_{1:n,\underline{\ell}} \theta_{\ell,\boldsymbol \cdot} \rVert^2 \1_{\Omega_L}.
\end{align*}
We have
\begin{align*}
	&\Eb \lVert (n^{-1}X_{1:n,\underline{\ell}}^\transpose X_{1:n,\underline{\ell}}-I_{N^d_\ell})\theta_{\ell,\boldsymbol \cdot} \rVert^2\\
	=&~ \sum_{i=1}^{N^d_\ell} \Var \left( \sum_{j=1}^{N_\ell^d} \frac{1}{n} \sum_{s=1}^{n} Y_{\ell,i}(X_s) Y_{\ell,j}(X_s) \theta_{\ell,j} \right) \\
	=&~ n^{-2}\sum_{i=1}^{N^d_\ell} \Eb \left[ \left( \sum_{j=1}^{N_\ell^d} \sum_{s=1}^{n} Y_{\ell,i}(X_s) Y_{\ell,j}(X_s) \theta_{\ell,j} \right)^2 \right] - \sum_{i=1}^{N^d_\ell} \theta_{\ell,i}^2\\
	=&~n^{-2} \sum_{i=1}^{N^d_\ell} \sum_{j,k=1}^{N_\ell^d} \theta_{\ell,j}\theta_{\ell,k}\sum_{s,t=1}^{n} \Eb Y_{\ell,i}(X_s) Y_{\ell,j}(X_s)  Y_{\ell,i}(X_t) Y_{\ell,k}(X_t) - \sum_{i=1}^{N^d_\ell} \theta_{\ell,i}^2\\
	=&~ n^{-2} \sum_{i=1}^{N^d_\ell} \sum_{j,k=1}^{N_\ell^d} \theta_{\ell,j}\theta_{\ell,k}\sum_{s=1}^{n} \Eb Y^2_{\ell,i}(X_s) Y_{\ell,j}(X_s) Y_{\ell,k}(X_s)\\
	& + n^{-2} \sum_{i=1}^{N^d_\ell} \sum_{j,k=1}^{N_\ell^d} \theta_{\ell,j}\theta_{\ell,k}\sum_{\substack{s,t=1\\s\neq t}}^{n} \Eb Y_{\ell,i}(X_s) Y_{\ell,j}(X_s) \Eb Y_{\ell,i}(X_t) Y_{\ell,k}(X_t)- \sum_{i=1}^{N^d_\ell} \theta_{\ell,i}^2\\
	=&~ n^{-2}N^d_\ell \sum_{j,k=1}^{N_\ell^d} \theta_{\ell,j}\theta_{\ell,k}\sum_{s=1}^{n} \Eb Y_{\ell,j}(X_s) Y_{\ell,k}(X_s) + n^{-2} \sum_{j=1}^{N_\ell^d} \theta_{\ell,j}^2 \sum_{\substack{s,t=1\\s\neq t}}^{n} 1- \sum_{i=1}^{N^d_\ell} \theta_{\ell,i}^2\\
	=&~ n^{-1}N^d_\ell \sum_{j=1}^{N_\ell^d} \theta_{\ell,j}^2 + n^{-1}(n-1) \sum_{j=1}^{N_\ell^d} \theta_{\ell,j}^2 - \sum_{i=1}^{N^d_\ell} \theta_{\ell,i}^2\\
	\leq&~ n^{-1}N^d_\ell \lVert \Pi_{\Hscr_\ell^d} f \rVert_{L^2(\sphere^d)}^2.
\end{align*}
Note that the non-zero eigenvalues of $X_{1:n,\underline{\ell}}X_{1:n,\underline{\ell}}^\transpose$ coincide with those of $X_{1:n,\underline{\ell}}^\transpose X_{1:n,\underline{\ell}}$ and that on the event $\Omega_L$ the positive eigenvalues of $X_{1:n,\underline{\ell}}^\transpose X_{1:n,\underline{\ell}}$ are bounded by $3n/2$.
It follows that
\begin{align*}
	\Eb& \lVert n^{-1}X_{1:n,\underline{\ell}}^\transpose Q_{1:n,\underline{1}:\underline{\ell-1}} Q_{1:n,\underline{1}:\underline{\ell-1}}^\transpose X_{1:n,\underline{\ell}} \theta_{\ell,\boldsymbol \cdot} \rVert^2 \,\1_{\Omega_L}\\
	&\leq \frac{3}{2} \, n^{-1} \Eb \lVert Q_{1:n,\underline{1}:\underline{\ell-1}}Q_{1:n,\underline{1}:\underline{\ell-1}}^\transpose X_{1:n,\underline{\ell}} \theta_{\ell,\boldsymbol \cdot} \rVert^2 \,\1_{\Omega_L}\\
	&= \frac{3}{2} \, \Eb \lVert \Pi^n_{\Pscr^d_{\ell-1}} \Pi_{\Hscr_\ell^d} f \rVert_n^2 \,\1_{\Omega_L}.
\end{align*}
By Proposition~\ref{prop:estimate:norm:emp:proj}, \ref{it:further:a}, we thus obtain
\begin{align*}
	\Eb \lVert n^{-1}X_{1:n,\underline{\ell}}^\transpose Q_{1:n,\underline{1}:\underline{\ell-1}} Q_{1:n,\underline{1}:\underline{\ell-1}}^\transpose X_{1:n,\underline{\ell}} \theta_{\ell,\boldsymbol \cdot} \rVert^2 \1_{\Omega_L} &\lesssim \lVert \Pi_{\Hscr_\ell^d} f \rVert^2_{L_2(\sphere^d)} \ell^d n^{-1}.
\end{align*}
By combining all the obtained estimates, we have
\begin{align*}
	&\Eb [ V^2(\Lc(\Ztilde_3|\XX),\Lc(\Ztilde|\XX)) \1_{\Omega_L}]\\
	&\lesssim \sigma^{-2}\sum_{\ell=L_0+1}^L \lVert \Pi_{\Hscr^d_\ell} f\rVert^4_{L^2(\sphere^d)}+ \sigma^{-2}\sum_{\ell=L_0+1}^L \ell^d \lVert \Pi_{\Hscr_\ell^d} f \rVert^2_{L_2(\sphere^d)}\\
	&\lesssim \sigma^{-2}  L_0^{-4s}R^4 + \sigma^{-2}L_0^{-2s+d}R^2.
\end{align*}
Thus
\begin{equation*}\Eb [ V(\Lc(\Ztilde_3|\XX),\Lc(\Ztilde|\XX)) \1_{\Omega_L}] \lesssim \sigma^{-1}  L_0^{-2s}R^2 + \sigma^{-1} L_0^{-s+d/2}R,
\end{equation*}
finishing the proof of the theorem.

\subsection{Proof of Theorem~\ref{thm:non-equivalence}}

For the experiments $\Eexp_n^d$ and $\Gexp_n^d$, we reduce the common parameter set $\Theta$ to $\Theta_n' = \{ f_{n,1},f_{n,2} \}$ for suitably defined $f_{n,1}, f_{n,2} \in \Theta$.
For any $n \in \N$ and both $\Theta=H^s_2(\sphere^d,R)$ or $\Theta=B^s_{r,q}(\sphere^d,R)$, we take $f_{n,1} \equiv 0$ (which trivially belongs to both of the smoothness classes $\Theta$ considered).
In order to define $f_{n,2}$, we first choose an integer $L$ such that $2n \leq \dim \Pscr^d_L \leq Cn$ for some suitable constant $C>2$.
Consider the linear subspace $W_n$ of $\Pscr_L^d$ defined by
\begin{equation*}
	W_n \defeq \{ f \in \Pscr_L^d : f(x) = 0 \quad \forall x \in \XX \}.
\end{equation*}
We have
\begin{equation*}
	\dim W_n \geq \dim \Pscr^d_L - n \geq \frac{\dim \Pscr_L^d}{2}.
\end{equation*}
Now, by \cite{Dai2013b}, Proposition~3.5, there exists a function $h_n \in W_n$ such that
\begin{equation}\label{eq:h_n:norm:all:p}
	\lVert h_n \rVert_{L^p(\sphere^d)} \asymp 1
\end{equation}
for all $p \in [1,\infty]$.
Based on these preliminaries we now construct $f_{2,n} \in \Theta$ such that
\begin{equation}\label{eq:prop:f_2n}
	\lVert f_{2,n}\rVert_{L^2(\sphere^d)} \asymp \begin{cases}
		n^{-s/d}, & \text{if } \Theta=H_2^s(\sphere^d,R),\\
		n^{-s/d}, & \text{if } \Theta=B^s_{r,q}(\sphere^d,R) \text{ with } r \geq 2,\\
		n^{-s/d+1/r-1/2}, & \text{if } \Theta=B^s_{r,q}(\sphere^d,R) \text{ with } r < 2.
	\end{cases}
\end{equation}

\paragraph{Case $\Theta=H_2^s(\sphere^d,R)$}
We make the ansatz
\begin{equation*}
	f_{2,n} = cn^{-s/d} h_n
\end{equation*}
with a constant $c>0$ independent of $n$ the value of which will be specified now.
Since $f_{2,n} \in W_n \subseteq \Pscr_L^d$, we have by the choice of $L$ combined with \eqref{eq:h_n:norm:all:p} that
\begin{align*}
	\lVert f_{2,n} \rVert_{H_2^s(\sphere^d)}^ 2  &\leq L^{2s} \lVert f_n \rVert_{L^2(\sphere^d)}^2\asymp c^2L^{2s}n^{-2s/d} \lVert h_n \rVert_{L^2(\sphere^d)}^2\asymp 1,
\end{align*}
showing that $f_{2,n} \in H_2^s(\sphere^d,R)$ for $c$ sufficiently small.
Moreover,
\begin{equation*}
	\lVert f_{2,n} \rVert_{L^2(\sphere^d)} = cn^{-s/d} \lVert h_{n} \rVert_{L^2(\sphere^d)} \asymp n^{-s/d}.
\end{equation*}

\paragraph{Case $\Theta=B^s_{r,q}(\sphere^d,R)$ with $r \geq 2$}

As in the previous case, we put
\begin{equation*}
	f_{2,n}=cn^{-s/d}h_n
\end{equation*}
with a constant $c>0$ independent of $n$ to be chosen appropriately.
Let $m \in \N$ be such that $2^{m-1} \leq L < 2^m$.
Note that $E_{2^j}(f_{2,n},r) \leq \lVert f_{2,n} \rVert_{L^r(\sphere^d)}$ for any $j \in \N_0$ and $E_{2^j}(f_{2,n},r) =0$ for $j \geq m$.
Then, using the equivalence \eqref{eq:equivalent:Besov:norm} of norms, we obtain
\begin{align*}
	\lVert f_{2,n} \rVert_{B^s_{r,q}(\sphere^d)} &\asymp \lVert f_{2,n} \rVert_{L^r(\sphere^d)} + \left(  \sum_{j=0}^{m} \left( 2^{js} E_{2^j}(f_{2,n},r) \right)^q \right)^{\frac 1 q}\\[2mm]
	&\lesssim \left( 1 + \left( \sum_{j=0}^m 2^{jsq}  \right)^{\frac 1 q} \right) \lVert f_{2,n} \rVert_{L^r(\sphere^d)}\\[1mm]
	&\lesssim 2^{sm} \lVert f_{2,n} \rVert_{L^r(\sphere ^d)}\\[1mm]
	&= c2^{sm}n^{-s/d} \lVert h_{n} \rVert_{L^r(\sphere ^d)}\\[1mm]
	&\asymp 1.
\end{align*}
Hence $f_{2,n} \in B^s_{r,q}(\sphere^d,R)$ provided that $c$ is chosen sufficiently small.
Moreover, as in the previous case,
\begin{equation*}
	\lVert f_{2,n} \rVert_{L^2(\sphere^d)} = cn^{-s/d} \lVert h_{n} \rVert_{L^2(\sphere^d)} \asymp n^{-s/d}.
\end{equation*}

\paragraph{Case $\Theta=B^s_{r,q}(\sphere^d,R)$ with $r < 2$}
Let $x_n^\ast \in \sphere^d$ be such that $\lVert h_n \rVert_{L^\infty(\sphere^d)} = \lvert h_n(x_n^\ast) \rvert$, and put
\begin{equation*}
	f_{2,n} = c n^{-s/d+1/r-1} h_n \kappa_{L,H}(\,\boldsymbol\cdot\,, x_n^\ast)
\end{equation*}
with $\kappa_{L,H}$ as defined by \eqref{eq:def:kappa} and \eqref{eq:def:H}.
As in the previous cases, the constant $c>0$ has to be chosen sufficiently small and independent of $n$.
On the one hand
\begin{equation*}
	\kappa_{L,H}(x_n^\ast,x_n^\ast) \geq \sum_{\ell=0}^{L} N^d_\ell \asymp L^{d},
\end{equation*}
on the other hand
\begin{equation*}
	\kappa_{L,H}(x_n^\ast,x_n^\ast) \leq \sum_{\ell=0}^{2L} N^d_\ell \asymp L^{d},
\end{equation*}
and hence $\kappa_{L,H}(x_n^\ast,x_n^\ast) \asymp L^d$.
We have
\begin{equation}\label{eq:h_kappa:sup}
	\lVert h_n \kappa_{L,H}(\,\boldsymbol \cdot\,,x_n^\ast) \rVert_{L^\infty(\sphere^d)} = \lvert h_n(x_n^\ast) \rvert \cdot \lvert \kappa_{L,H}(x_n^\ast, x_n^\ast) \rvert \asymp L^d.
\end{equation}
By \cite{Narcowich2006a}, Proposition~2.5, we obtain that, for $1 \leq p < \infty$,
\begin{align}
	\lVert h_n \kappa_{L,H}(\,\boldsymbol \cdot\,,x_n^\ast) \rVert_{L^p(\sphere^d)} &\lesssim L^{d(1-1/p)} \lVert h_n \kappa_{L,H}(\,\boldsymbol \cdot\,,x_n^\ast) \rVert_{L^1(\sphere^d)}\notag\\
	&\lesssim L^{d(1-1/p)} \lVert h_n \rVert_{L^\infty(\sphere^d)} \, \lVert \kappa_{L,H}(\,\boldsymbol \cdot\,,x_n^\ast) \rVert_{L^1(\sphere^d)}\notag\\
	&\lesssim L^{d(1-1/p)}\label{eq:h_kappa:p:upper}
\end{align}
where we used \eqref{eq:h_n:norm:all:p} and \cite{Wang2017a}, Theorem~3.3, in the last estimate.
Using \eqref{eq:h_kappa:sup} and \cite{Narcowich2006a}, Proposition~2.5, again, we have
\begin{equation*}
	L^d \asymp \lVert h_n \kappa_{L,H}(\,\boldsymbol \cdot\,,x_n^\ast) \rVert_{L^\infty(\sphere^d)} \lesssim L^{d/p} \lVert h_n \kappa_{L,H}(\,\boldsymbol \cdot\,,x_n^\ast) \rVert_{L^p(\sphere^d)}. 
\end{equation*}
Combining the last estimate with \eqref{eq:h_kappa:p:upper}, we obtain
\begin{equation}\label{eq:h_kappa:p:asymp}
	\lVert h_n \kappa_{L,H}(\,\boldsymbol \cdot\,,x_n^\ast) \rVert_{L^p(\sphere^d)} \asymp L^{d(1-1/p)}
\end{equation}
also for $1 \leq p < \infty$.
Using \eqref{eq:h_kappa:p:asymp}, we first obtain with $m$ being chosen as in the previous case that
\begin{align*}
	\lVert f_{2,n} \rVert_{B^s_{r,q}(\sphere^d)} &\leq \left( 1 + \left( \sum_{j=0}^{m+2} 2^{jsq} \right)^{1/q} \right) \lVert f_{2,n} \rVert_{L^r(\sphere^d)}\\
	&\lesssim 2^{ms} n^{-s/d+1/r-1} \lVert h_n \kappa_{L,H}(\,\boldsymbol \cdot\,,x_n^\ast) \rVert_{L^r(\sphere^d)}\\
	&\asymp 2^{ms} n^{-s/d+1/r-1} L^{d(1-1/r)}\\
	&\asymp 1,
\end{align*}
hence $f_{2,n} \in B^s_{r,q}(\sphere^d,R)$ for $c$ sufficiently small.
Moreover, also from \eqref{eq:h_kappa:p:asymp} we directly obtain
\begin{equation*}
	\lVert f_{2,n} \rVert_{L^2(\sphere^d)} \asymp n^{-s/d+1/r-1/2},
\end{equation*}
which is \eqref{eq:prop:f_2n} for $\Theta = B^s_{r,q}(\sphere^d,R)$ and $r<2$.

\bigskip

Having established \eqref{eq:prop:f_2n} in all three cases of interest, we can now derive \eqref{eq:bound:non-equivalence}.
By the very definition of $W_n$ we have $f_{2,n}\lvert_{\XX} ~ \equiv 0$, which trivially implies that
\begin{equation}\label{eq:proof:non-equivalence:1}
	V(\Pb^{\Eexp_n^d}_{f_{1,n}}, \Pb^{\Eexp_n^d}_{f_{2,n}})=0
\end{equation}
for any $n$.
Combining \eqref{eq:prop:f_2n} with \eqref{eq:app:bound:tv:gwnm}, however, shows that
\begin{equation}\label{eq:proof:non-equivalence:2}
	V(\Pb^{\Gexp_n^d}_{f_{1,n}}, \Pb^{\Gexp_n^d}_{f_{2,n}}) \gtrsim 1
\end{equation}
for all the considered cases.
\cite{Ray2019}, Lemma~1, states that
\begin{equation}\label{eq:deficiency:lower:bound}
	\delta(\Eexp_n^d,\Gexp_n^d) \geq \frac{1}{2} (V(\Pb^{\Gexp_n^d}_{f_{1,n}},\Pb^{\Gexp_n^d}_{f_{2,n}}) - V(\Pb^{\Eexp_n^d}_{f_{1,n}}, \Pb^{\Eexp_n^d}_{f_{2,n}})).
\end{equation}
Plugging \eqref{eq:proof:non-equivalence:1} and \eqref{eq:proof:non-equivalence:2} into \eqref{eq:deficiency:lower:bound} implies that $\delta(\Eexp_n^d,\Gexp_n^d) \gtrsim 1$ and hence the asymptotic non-equivalence of $\Eexp_n^d$ and $\Gexp_n^d$.

\subsection{Auxiliary results}

\subsubsection{A consequence of the Marcinkiewicz-Zygmund condition}

Our asymptotic equivalence results for specific function classes stated in Sections~\ref{sec:AE:Sobolev} and \ref{sec:AE:Besov} rely on recent contributions concerning numerical integration on spheres, for instance, from \cite{Lu2023}.
The main assumption in \cite{Lu2023} is the validity of a certain Marcinkiewicz-Zygmund condition on the cubature points (see \cite{Lu2023}, Equation~(1.3)).
The following technical lemma, which is used in the proofs of Theorems~\ref{thm:fixed:Sobolev} and \ref{thm:fixed:Besov}, is essentially based on \cite{Dai2006}, Theorem~2.1.
Our formulation is slightly closer to the version stated in \cite{Lu2023}, Lemma~3.1.
The condition \eqref{eq:cond:lemma:Dai06} in the lemma is always satisfied for spherical $t$-designs when taking $p_0=2$, $t \geq 2L_0$, $C_0=1$ (in fact, in this case even equality holds in \eqref{eq:cond:lemma:Dai06}).
The assertion of Lemma~\ref{lemma:Dai06} then allows to bound the value of the cubature formula (which is exact on $\Pscr^d_{L_0}$) from above by the target integral also for truncated spherical harmonics expansions where the cubature is not exact any more (the more coefficients are involved, the less accurate the bound becomes).
This kind of bound is used in the proofs of Theorems~\ref{thm:fixed:Sobolev} and \ref{thm:fixed:Besov}, respectively, to control the empirical norms appearing in the term $\Delta_1$ of Theorem~\ref{thm:fixed:general}.

\begin{lemma}\label{lemma:Dai06}
	Let $\XX = \{ x_1,\ldots,x_n \} \subset \sphere^d$ and $\omega_1,\ldots,\omega_n > 0$ be weights satisfying, for some $L_0 \in \N_0$ and some $p_0 \in (0,\infty)$,
	\begin{equation}\label{eq:cond:lemma:Dai06}
		\sum_{i=1}^n \omega_i \lvert f(x_i) \rvert^{p_0} \leq C_0 \int_{\sphere^d} \lvert f(x) \rvert^{p_0} \dd \mu (x), \quad f \in \mathscr P^d_{L_0},
	\end{equation}
	where $C_0 > 0$ is a numerical constant.
	Then, for any $p \in (0,\infty)$ and any integer $L \geq L_0$,
	\begin{equation*}
		\sum_{i=1}^n \omega_i \lvert f(x_i) \rvert^{p} \leq CC_0 \left( \frac{L}{L_0} \right)^d \int_{\sphere^d} \lvert f(x) \rvert^{p} \dd \mu (x), \quad f \in \mathscr P^d_{L},
	\end{equation*}
	with a numerical constant $C=C(d,p)>0$ depending only on $d$ and $p$.
\end{lemma}

\subsubsection{Bound for $\Pb(\Omega_L^\complement)$}\label{subsub:bound:Omegac}

The defining property of the event $\Omega_L$,
\begin{equation*}
	\frac 1 2 \, \lVert f \rVert_{L^2(\sphere^d)}^2 \leq \lVert f \rVert_{n}^2 \leq \frac 3 2 \, \lVert f \rVert_{L^2(\sphere^d)}^2, \quad f \in \Pscr^d_L,
\end{equation*}
is equivalent to
\begin{equation*}
	\lVert n^{-1}X^\top X - I \rVert_2 \leq \frac{1}{2}
\end{equation*}
where $X$ denotes the design matrix defined in the Proof of Theorem~\ref{thm:random:Sobolev}.
Applying Theorem~1 from \cite{Cohen2013} (taking into account the corrected numerical constants from \cite{Cohen2018}) with $\delta = 1/2$ (leading to the choice $c_{1/2}=(3 \ln(3/2) - 1)/2 > 0$ in the statement of that theorem) and choosing the \emph{maximal} $L$ such that for $\kappa_r=(3\ln(3/2)-1)/(2+2r)$
\begin{equation*}
	\sum_{\ell=0}^L N_{\ell}^d \leq \kappa_r \, \frac{n}{\ln (n)},
\end{equation*}
we obtain the estimate
\begin{equation}\label{app:prob:bound:omega:complement}
	\Pb(\Omega_L^\complement) \leq 2n^{-r}.
\end{equation}
In Theorem~\ref{thm:random:Sobolev} and its proof, we choose $r=2$ which leads to $\kappa_2 \approx 0.036$.

\subsubsection{Generalized thin $QR$ decomposition}\label{s:QR:decomposition}

For the proof of Theorem~\ref{thm:random:Sobolev} we need a generalization of the classical thin $QR$ decompostion of a rectangular matrix with full column rank (see \cite{Golub2013}, Theorem~5.2.3).
Since we are not aware of a reference for this kind of generalization, we state it here in full detail (a proof can be obtained by an obvious adjustment of the one in the classical case).
Recall that a square matrix $A \in \R^{n \times n}$ is called \emph{positive definite} if the following two conditions hold:
\begin{enumerate}[label=(\roman*)]
	\item $A$ is symmetric, that is, $A=A^\transpose$,
	\item $x^\transpose A x > 0$ for all $x \in \R^n$ with $x \neq 0 \in \R^n$.
\end{enumerate}
Here and in the proof of Theorem~\ref{thm:random:Sobolev} (given in Section~\ref{s:proof:thm:random:Sobolev}), we will use underlined indices like $\underline\ell$ in order to refer to the indices belonging to the block with index $\ell$. For instance, in the following theorem $\underline\ell$ is a shorthand for the indices running from $\sum_{j=1}^{\ell-1} N_j + 1$ to $\sum_{j=1}^{\ell} N_j$.

\begin{theorem}[Generalized thin $QR$ decomposition]\label{thm:generalized:QR}
	Suppose $X \in \R^{n \times D}$ has full column rank.
	Let $N_1,\ldots,N_L \in \N$ such that $\sum_{\ell=1}^L N_\ell=D$.
	Then, the generalized thin $QR$ decomposition
	\begin{equation}\label{eq:X=QR}
		X = QR
	\end{equation}
	is unique where $Q \in \R^{n \times D}$ has orthonormal columns and $R =(R_{\underline{\ell},\underline{\ell'}}) \in \R^{D \times D}$ with $R_{\underline{\ell},\underline{\ell'}} \in \R^{N_\ell \times N_{\ell'}}$ satisfies the following properties:
	\begin{enumerate}[label=(\roman*)]
		\item $R_{\underline\ell,\underline\ell'}=0 \in \R^{N_\ell \times N_{\ell'}}$ if $\ell > \ell'$ and
		\item $R_{\underline\ell,\underline\ell}$ is positive definite.
	\end{enumerate}
\end{theorem}

\begin{proof}
	The existence part of Theorem~\ref{thm:generalized:QR} can be proven in a constructive way, for instance by adapting the classical Gram-Schmidt algorithm (see \cite{Golub2013}, Section~5.2.7) in a suitable manner.
	The resulting algorithm is stated in Algorithm~\ref{alg:thin_QR}.
	For a positive definite matrix $A \in \R^{D \times D}$, we call a decomposition
	\begin{equation*}
		A = \Lambda \Lambda^\transpose
	\end{equation*}
	where $\Lambda = (\Lambda_{\underline\ell,\underline\ell'}) \in \R^{D \times D}$ with blocks $\Lambda_{\underline \ell,\underline \ell'} \in \R^{N_\ell \times N_{\ell'}}$ a (generalized) Cholesky decomposition if the following properties are satisfied:
	\begin{enumerate}[label=(\roman*)]
		\item $\Lambda_{\underline\ell,\underline\ell'}=0 \in \R^{N_\ell \times N_{\ell'}}$ if $\ell < \ell'$,
		\item $\Lambda_{\underline\ell,\underline\ell}$ is positive definite.
	\end{enumerate}
	As for the usual Cholesky decomposition (which corresponds to the case $L=D$ and $N_1=\ldots=N_D=1$) the generalized Cholesky decomposition is uniquely determined given $L$ and $N_1,\ldots,N_L$ and the factor $\Lambda$ is referred to as the \emph{Cholesky factor}.
	Note that $\Lambda=R^\transpose$ for the factor $R$ in the generalized thin $QR$ decomposition $X=QR$ is the Cholesky factor in the generalized Cholesky factorization $P=\Lambda \Lambda^\transpose$ of the symmetric positive definite matrix $P=X^\transpose X$ (of course, for the same choice of $L$ and $N_1,\ldots,N_L$).
	The uniqueness of the generalized thin $QR$ decomposition follows from the uniqueness of the Cholesky factor $\Lambda=R^\transpose$ combined with $Q=XR^{-1}$.
\end{proof}

\begin{algorithm}
	\caption{Generalized thin $QR$ decomposition via an adaption of the classical Gram-Schmidt procedure}\label{alg:thin_QR}
	\begin{algorithmic}[1]
		\Require $X \in \R^{n \times D}$ of full column rank, $L \in \{1,\ldots, D\}$ and $N_1,\ldots,N_L \in \N$ satisfying $N_1+\ldots+N_L=D$
		\State Initialize (empty) matrices $Q \in \R^{n \times D}$ and $R \in \R^{D \times D}$ 
		\State $R_{\underline 1,\underline 1} \gets (X_{1:n,\underline{1}}^\transpose X_{1:n,\underline{1}})^{1/2}$
		\State $Q_{1:n,\underline{1}} \gets X_{1:n,\underline{1}}R_{\underline{1},\underline{1}}^{-1}$
		\For{$\ell=2,\ldots,L$}
		\State\label{line:thin_QR} $R_{\underline{1}:\underline{\ell-1},\underline{\ell}} \gets Q_{1:n,\underline{1}:\underline{\ell-1}}^\transpose X_{1:n,\underline{\ell}}$
		\State $A \gets X_{1:n,\underline{\ell}} - Q_{1:n,\underline{1}:\underline{\ell-1}}R_{\underline{1}:\underline{\ell-1},\underline{\ell}}$
		\State $R_{\underline{\ell},\underline{\ell}} \gets (A^\transpose A)^{1/2}$
		\State $Q_{1:n,\underline{\ell}} \gets z R_{\underline{\ell},\underline{\ell}}^{-1}$
		\EndFor
		\Ensure $Q \in \R^{n \times D}$ with orthonormal columns and $R \in \R^{D \times D}$ block upper triangular with positive definite matrices on the diagonal satisfying $X=QR$
	\end{algorithmic}
\end{algorithm}

In the following proposition we state key stochastic properties of the block upper triangular matrix $R$ in the generalized thin $QR$ decomposition~\eqref{eq:X=QR}.

\begin{proposition}\label{prop:R} Let $R$ be defined as the block upper triangular matrix $R$ with positive definite diagonal blocks in the generalized thin $QR$ decomposition $X=QR$ of the design matrix $X=(Y_j(X_i)) \in \R^{n \times D}$, where $Y_1,\ldots,Y_D$ are all the spherical harmonics up to the resolution level $L$ (consequently, $D=\dim \Pscr^d_L$), the numbers $N_\ell$ in the generalized thin $QR$ decomposition are chosen as $N_\ell=N_\ell^d$, and $X_1,\ldots,X_n$ are i.i.d.\,$\sim \Uc(\sphere^d)$.
Then,
\begin{enumerate}[label=(\alph*)]
	\item\label{it:prop:R:a} $\Eb[R_{\underline{\ell},\underline{\ell}}]=\lambda_\ell I_{N^d_\ell} \in \R^{N^d_\ell \times N^d_\ell}$, \quad  $\ell = 1,\ldots,L$,
	\item\label{it:prop:R:b} $\Eb[R_{\underline{\ell},\underline{\ell}'}]=0 \in \R^{N^d_\ell \times N^d_{\ell'}}$, \quad $\ell,\ell' = 1,\ldots,L$ with $\ell \neq \ell'$.
\end{enumerate}
\end{proposition}

\begin{proof}
	Let $U$ be a random rotation distributed according to the Haar measure on $\SO(d+1)$ and set $\Xtilde_i \defeq U^{-1}X_i$ (here, $U$ is should be interpreted as a $d+1\times d+1$-matrix and $X_i$ as an element of $\R^{d+1}$).
	Then, $\Xtilde_1,\ldots,\Xtilde_n$ are i.i.d.\ $\sim \Uc(\SO(d+1))$.
	Denote with $\Xtilde = (Y_j(\Xtilde_i))$ the corresponding design matrix.
	Note that by \cite{Groemer1996}, Proposition~3.2.4, $\Xtilde$ can be written as
	\begin{equation*}
		\Xtilde = X B 
	\end{equation*}
	where $B = (B_{\underline{\ell},\underline{\ell}'})_{\ell,\ell'=1,\ldots,L} \in \R^{D \times D}$ is a block matrix with $B_{\underline{\ell},\underline{\ell}'} = 0 \in \R^{N^d_\ell \times N^d_{\ell'}}$ if $\ell \neq \ell'$, and $B_{\underline{\ell},\underline{\ell}}$ is orthogonal.
	Let $\Xtilde = \Qtilde \Rtilde$ be the generalized thin $QR$ decomposition of $\Xtilde$ which is related to the $QR$ decomposition of $X$ via
	\begin{equation*}
		\Xtilde = XB = QRB = QBB^\transpose RB = \Qtilde \Rtilde,
	\end{equation*}
	that is, $\Rtilde = B^\transpose R B$.
	Since $R$ and $\Rtilde$ have the equal law, we have
	\begin{align*}
		\Eb[R] &= \Eb[\Rtilde]\\
		&= \Eb[B^\transpose R B]\\
		&= \Eb \Eb[B^\transpose R B | U]\\
		&= \Eb [B^\transpose\Eb[ R  | U] B]\\
		&= \Eb [B^\transpose\Eb[ R ] B].
	\end{align*}
	Combining Schur's lemma (see \cite{Serre1977}, Proposition~4 and Corollary~1, which carry over to the compact group case as explained in Section~4.3 of that reference) with the fact that the spaces $\Hscr^d_\ell$, $\ell \in \N_0$, are in one-to-one correspondence with the irreducible representations of $\SO(d+1)$ (see \cite{Dai2013}, Theorem~1.7.2) then implies both assertions \ref{it:prop:R:a} and \ref{it:prop:R:b}.
\end{proof}

\subsubsection{A reverse Hölder inequality for spherical harmonics}

The following result, which provides even (slightly) stronger statements than what we need in the proof of Theorem~\ref{thm:random:Sobolev}, gathers some special instances of \emph{reverse Hölder inequalities} for spherical harmonics from \cite{Dai2016}.
\begin{proposition}\label{prop:reverse_Hoelder}
Let $f_\ell \in \Hscr_\ell^d$ and $X \sim \Uc(\sphere^d)$. Then,
\begin{equation*}
	\Eb[ f_\ell^4(X)] \lesssim \begin{cases}
		\lVert f_\ell \rVert_{L^2(\sphere^d)}^4\, \ell^{\frac{d-1}{2}}, & \text{ if } d \in \{2,3\},\\[2mm]
		\lVert f_\ell \rVert_{L^2(\sphere^d)}^4\, \ell^{d-2}, & \text{ if } d \geq 4.\\
	\end{cases}
\end{equation*}
\end{proposition}

\begin{proof}
	Note that $\Eb[ f_\ell^4(X)] = \int_{\sphere^d} f_\ell^4(x) \, \dd \mu (x) \asymp \lVert f_\ell \rVert_{L^4(\sphere^d)}^4$.
	From this, the claimed assertions follow from Theorem~1.1 in \cite{Dai2016}: The cases $d=2$ and $d=3$ follow from assertions (iv) and (i) of that theorem, respectively. The case $d \geq 4$, however, follows form assertion (ii).
\end{proof}

\subsubsection{Further estimates}

Part~\ref{it:further:a} of the following proposition is an adaption of \cite{Reiss2008Asymptotic}, Proposition~4.9, to our setup.

\begin{proposition}\label{prop:estimate:norm:emp:proj}
	Let $f_\ell \in \Hscr_\ell^d$. Then, the following assertions hold true:
	\begin{enumerate}[label=(\alph*)]
		\item\label{it:further:a} $\Eb \lVert \Pi^n_{\Pscr_{\ell-1}^d} f_\ell \rVert_n^2 \1_{\Omega_L} \lesssim \lVert f_\ell \rVert^2_{L_2(\sphere^d)} \ell^d n^{-1}$,
		\item\label{it:further:b} $\Eb \lVert \Pi^n_{\Pscr^d_{\ell-1}} f_\ell  \rVert_n^4 \lesssim \lVert f_\ell \rVert^4_{L_2(\sphere^d)} \ell^d n^{-1} + \lVert f_\ell \rVert^4_{L_2(\sphere^d)} \ell^{2d-2} n^{-2}$.
	\end{enumerate}
\end{proposition}

\begin{proof}
	We have
	\begin{align*}
		\Eb \lVert \Pi^n_{\Pscr_{\ell-1}^d}f_{\ell} \rVert_n^2 \1_{\Omega_L} &= \Eb \left[ \sup_{p \in \Pscr^d_{\ell-1}} \, \frac{\lvert \langle f_\ell,p \rangle_n \rvert^2}{\lVert p \rVert_n^2} \, \1_{\Omega_L} \right]\\
		&= \Eb \left[  \sup_{\substack{p \in \Pscr^d_{\ell-1}\\\lVert p\rVert_{L^2(\sphere^d)}=1}} \, \lvert \langle f_{\ell},p \rangle_n \rvert^2 \, \cdot \sup_{p \in \Pscr^d_{\ell-1}} \frac{\lVert p \rVert_{L^2(\sphere^d)}^2}{\lVert p \rVert_n^2} \, \1_{\Omega_L} \right]\\
		&\lesssim \Eb \left[ \sup_{\substack{\lVert c \rVert = 1}} \left| \frac{1}{n} \sum_{i=1}^{n} f_{\ell}(X_i) \sum_{j = 1}^{\dim \Pscr^d_{\ell-1}} c_{j} Y_{j}(X_i) \right|^2 \right]\\
		&\leq \Eb \left[ \sum_{j=1}^{\dim \Pscr^d_{\ell-1}} \left| \frac{1}{n} \sum_{i=1}^{n} f_{\ell}(X_i) Y_{j}(X_i) \right|^2 \right]\\
		&= \frac{1}{n}\, \Eb \left[  f_\ell^2(X_1) \sum_{j=1}^{\dim \Pscr^d_{\ell-1}} Y_{j}^2(X_1) \right]\\
		&= \lVert f_\ell \rVert^2_{L_2(\sphere^d)} \cdot \frac{\dim \Pscr^d_{\ell-1}}{n}\\
		&\asymp \lVert f_\ell \rVert^2_{L_2(\sphere^d)}  \ell^d n^{-1},
	\end{align*}
	which proves \ref{it:further:a}.
	For the proof of \ref{it:further:b}, we use the decomposition
	\begin{equation*}
		\Eb \lVert \Pi^n_{\Pscr^d_{\ell-1}} f_\ell  \rVert_n^4 = \Eb\lVert \Pi^n_{\Pscr^d_{\ell-1}} f_\ell  \rVert_n^4 \1_{\Omega_L} + \Eb \lVert \Pi^n_{\Pscr^d_{\ell-1}} f_\ell  \rVert_n^4 \1_{\Omega^\complement_L}.
	\end{equation*}
	For the first term on the right-hand side, we have by \ref{it:further:a} and the defining property of the event $\Omega_L$ that
	\begin{align*}
		\Eb\lVert \Pi^n_{\Pscr^d_{\ell-1}} f_\ell  \rVert_n^4 \1_{\Omega_L} &\leq \Eb\lVert \Pi^n_{\Pscr^d_{\ell-1}} f_\ell  \rVert_n^2 \lVert f_\ell \rVert_n^2 \1_{\Omega_L}\\
		&\leq \frac{3}{2} \lVert f_\ell \rVert^2_{L^2(\sphere^d)} \Eb\lVert \Pi^n_{\Pscr^d_{\ell-1}} f_\ell  \rVert_n^2 \1_{\Omega_L}\\
		&\lesssim \lVert f_\ell \rVert^4_{L_2(\sphere^d)} \ell^d n^{-1}.
	\end{align*}
	The second term on the right-hand side is bounded as
	\begin{align*}
		\Eb \lVert \Pi^n_{\Pscr^d_{\ell-1}} f_\ell  \rVert_n^4 \1_{\Omega^\complement_L} &\leq \Eb \lVert f_\ell  \rVert_n^4 \1_{\Omega^\complement_L}\\
		&= \Eb \left[ \left( \frac{1}{n} \sum_{i=1}^{n} f_\ell^2(X_i) \right)^2  \1_{\Omega^\complement_L} \right]\\
		&\leq \lVert f_\ell \rVert^4_{L_2(\sphere^d)} (N^d_\ell)^2 \Pb(\Omega_L^\complement)\\
		&\lesssim \lVert f_\ell \rVert^4_{L_2(\sphere^d)} \ell^{2d-2} n^{-2}.
	\end{align*}
	Combining the two obtained bounds implies \ref{it:further:b}.
\end{proof}

\printbibliography

\end{document}